\newtheorem{theorem}{Theorem}[section]
\newtheorem{lemma}[theorem]{Lemma}
\newtheorem{corollary}[theorem]{Corollary}
\newtheorem{remark}[theorem]{Remark}
\newtheorem{definition}[theorem]{Definition}
\newenvironment{proposition*}[1]{\smallskip\noindent{\bf #1.}\it}{\medskip}
\newenvironment{theorem*}[1]{\smallskip\noindent{\bf #1.}\it}{\medskip}
\numberwithin{equation}{section}
\newcommand\myIm{\operatorname{Im}}
\newcommand{\wt}{\widetilde}
\newcommand{\bC}{\mathbb C}
\newcommand{\bN}{\mathbb N}
\newcommand{\bR}{\mathbb R}
\newcommand{\cA}{\mathcal A}
\newcommand{\cB}{\mathcal B}
\newcommand{\cC}{\mathcal C}
\newcommand{\cD}{\mathcal D}
\newcommand{\cI}{\mathcal I}
\newcommand{\cQ}{\mathcal Q}
\newcommand{\sX}{\mathscr X}
\newcommand{\bol}{\boldsymbol \lambda}
\newcommand{\bom}{\boldsymbol \mu}
\begin{document}
	
\title[Trace Formula for Schr\"odinger  Operators]%
{On the First Trace Formula  for Schr\"odinger  Operators}%

\author[R.~Hryniv \and Ya.~Mykytyuk]%
{Rostyslav Hryniv \and Yaroslav Mykytyuk}%

\address[R.H.]{
	Ukrainian Catholic University, 2a Kozelnytska str., 79026, Lviv, Ukraine \and
	University of Rzesz\'{o}w, 1 Pigonia str., 35-310 Rzesz\'{o}w, Poland
}%
\email{rhryniv@ucu.edu.ua, rhryniv@ur.edu.pl}

\address[B.M.]{Lviv National University,
	1 Universytetska st., 79602 Lviv, Ukraine}%
\email{bohdmelnyk@gmail.com}%

\address[Ya.M.]{Lviv National University,
	1 Universytetska st., 79602 Lviv, Ukraine}%
\email{yamykytyuk@yahoo.com}%

\thanks{}%
\subjclass[2010]{ Primary: 34L40; Secondary: 34L25, 47E05, 81U40}%
\keywords{Schr\"odinger operator, trace formula, integrable potentials}%

\date{2 March 2019}%


\begin{abstract}
  We prove that the so-called first trace formula holds for all Schr\"odinger operators on the line with real-valued integrable potentials.
\end{abstract}

\maketitle

\section{Introduction}\label{sec:intr}

For a real-valued function~$q$ that is integrable on the line, we denote by~$T_q$ the self-adjoint Schr\"odinger operator in the Hilbert space $L_2(\mathbb{R})$ given by the differential expression
\begin{equation*}
\mathfrak{t}_q(f) := -f'' + qf 
\end{equation*}
on the maximal domain. It is well known (see e.g., \cite[Ch.~15]{Wei}, \cite{Sto}) that for every such~$q$ the absolutely continuous spectrum of~$T_q$ coincides with the non-negative half-line~$\mathbb{R}_+$ and there is no singular continuous spectrum. Moreover, to every non-zero $k^2 \in \mathbb{R}_+$ there correspond the right and left Jost solutions of the equation $\mathfrak{t}_q(f) = k^2 f$ that are asymptotic to $e^{\pm ikx}$ at $\pm\infty$ respectively~\cite{DT}. These Jost solutions allow one to introduce the right and left reflection coefficients $r_\pm(\cdot;q)$, which turn out to be continuous functions on~$\mathbb{R}_0:= \mathbb{R}\setminus\{0\}$ (see the next section).  

Next, the discrete part of the spectrum of $T_q$ consists of at most countably many simple negative eigenvalues with the only possible accumulation point at the origin. We denote these eigenvalues by 
\[
-\kappa_1^2< \dots < -\kappa_N^2 <0
\]
with $N=0$ when the discrete spectrum is empty and $N=\infty$ when it is countable. It will be convenient to assume that $N=\infty$ by formally setting $\kappa_{N+1} = \kappa_{N+2} = \dots =0$ otherwise. In this way every real-valued summable potential $q$ generates a sequence $\kappa(\cdot;q): \bN \to [0,\infty)$ with the following three properties: 
\begin{enumerate}
	\item $\kappa(j;q) > 0$ implies that~$\kappa(j;q) > \kappa(j+1;q)$;
	\item if $\kappa(j;q) =0$, then $\kappa(j+1;q)=0$;
	\item  the set $\{ -\kappa^2(j;q) \mid j\in\bN, \kappa(j;q)>0\}$ coincides with the negative spectrum of the operator~$T_q$. 
\end{enumerate}

The pioneering paper of Gardner, Green, Kruskal, and Miura~\cite{GGKM} of 1967 on solvability of the Korteweg--de Vries (KdV) equation by the method of inverse scattering transform for the related family of the Schr\"odinger operators motivated the increasing interest in spectral theory of the latter. Later, they discovered~\cite{MGK} that the KdV equation possesses a series of conservation laws and constants of motions, and then Faddeev and Zakharov in their famous paper~\cite{FZ} of 1971 interpreted KdV as a Hamiltonian system and showed that the conservation laws are the first integrals of that system. Also, they expressed the conserved quantities of the KdV equation via the spectral characteristics of the Schr\"odinger operators in the form of so called trace formulae. The first of these trace formulae reads
\begin{equation}\label{eq:intr.FZ}
-4\sum_{j=1}^{N} \kappa(j;q) 
+ \frac{1}{\pi}\int_{\mathbb{R}}\log (1-|r_\pm(k;q)|^2)^{-1}\,dk 
= \int_{\mathbb{R}} q(x)\,dx
\end{equation}
and thus relates the potential~$q$ of the Schr\"odinger operator $T_q$ and its scattering characteristics---eigenvalues and the reflection coefficients. Although the initial proof was for real-valued $q$ from the Schwartz class of infinitely smooth and rapidly decaying functions, in their paper~\cite{GH} Gesztesy and Holden showed, among many other important results, that~\eqref{eq:intr.FZ} holds also if $q$ is a real-valued potential belonging to the space~$L_1(\mathbb{R},(1+|x|)^\varepsilon dx)$ for some positive~$\varepsilon$. Our main aim in this note is to prove that the first trace formula takes place for all real-valued integrable potentials; this result will be used in our further research of reflectionless potentials and generalized soliton solutions of KdV.

In what follows, we shall denote by $\cQ_1$ the set of all real-valued functions in~$L_1(\mathbb{R})$ considered as a real Banach space with the $L_1$-norm. Then, in virtue of the Lieb--Thirring inequality~\cite{LT} (with the case under consideration proved by Weidl~\cite{Weidl} and the exact constant established by Hundertmark~a.o.~\cite{HLT}), we have
\begin{equation}\label{eq:intr.LT}
\sum_{j=1}^{\infty}\kappa(j;q) \le \frac12\|q\|_{L_1}, \qquad q\in \cQ_1,
\end{equation}
i.e., $\kappa(\cdot;q) \in \ell_1(\bN)$ for every potential~$q\in\cQ_1$. Observe also that $|r_\pm(k;q)|<1$ on~$\mathbb{R}_0$ for such $q$, whence the function
\begin{equation*}\label{eq:intr.h}
h(\cdot; q):= \log(1-|r_\pm(k;q)|^2)^{-1}
\end{equation*}
is non-negative a.e.\ on~$\mathbb{R}$. Combining~\eqref{eq:intr.LT} and \eqref{eq:intr.FZ}, we conclude that under the additional assumptions of the paper~\cite{GH}, the function $h(\cdot;q)$ belongs to the space~$\cQ_1$ and, moreover,
\[
\|h(\cdot;q)\|_{L_1} \le 3\pi \|q\|_{L_1}. 
\]

Our main results are given by the following theorem:

\begin{theorem}\label{thm:main}
	The mappings 
	\begin{equation}\label{eq:intr.cont}
	\cQ_1 \ni q \mapsto \kappa(\cdot;q) \in \ell_1(\bN), \qquad 
	\cQ_1 \ni q \mapsto h(\cdot; q) \in \cQ_1
	\end{equation}	
	are continuous; moreover, the first trace formula~\eqref{eq:intr.FZ} holds true for all $q \in \cQ_1$.
\end{theorem}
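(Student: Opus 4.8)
The plan is to prove the trace formula by density and continuity. The formula is already known on the dense subset $\mathcal{Q}_1 \cap L_1(\mathbb{R},(1+|x|)^\varepsilon\,dx)$ (for instance on compactly supported smooth functions, or via the Gesztesy–Holden result), so if I can show that all three terms in~\eqref{eq:intr.FZ} depend continuously on $q\in\mathcal{Q}_1$ in the $L_1$-topology, the identity extends from the dense subset to all of $\mathcal{Q}_1$. The right-hand side $q\mapsto\int_{\mathbb{R}}q\,dx$ is trivially continuous. Thus the whole theorem reduces to establishing the two continuity claims in~\eqref{eq:intr.cont}: continuity of $q\mapsto\kappa(\cdot;q)$ into $\ell_1(\mathbb{N})$ and of $q\mapsto h(\cdot;q)$ into $\mathcal{Q}_1$. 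Let me sketch how I would approach each.

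For the eigenvalue map I would work through the Jost solutions and the Jost function. Recall that the right Jost solution $e_+(x,k;q)$ solving $\mathfrak{t}_q f = k^2 f$ with $e_+(x,k;q)\sim e^{ikx}$ at $+\infty$ is constructed via a Volterra integral equation, and standard estimates (Marchenko-type bounds) show that $e_+(x,k;q)$ and its Wronskian data depend on $q$ locally uniformly, with the dependence controlled by $\|q\|_{L_1}$. The negative eigenvalues $-\kappa_j^2$ are exactly the zeros $k=i\kappa_j$ of the Jost function $a(k;q)$ on the positive imaginary axis; these are simple and the number of them is bounded (again by the Lieb–Thirring bound and Bargmann-type estimates). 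The first task is to prove that as $q_n\to q$ in $L_1$, the Jost functions $a(\cdot;q_n)$ converge to $a(\cdot;q)$ uniformly on compact subsets of the closed upper half-plane, from which a Rouché/Hurwitz argument gives convergence of the zeros, hence of each $\kappa(j;q_n)\to\kappa(j;q)$. Upgrading pointwise-in-$j$ convergence to convergence in $\ell_1$ is where care is needed: I would use the Lieb–Thirring inequality~\eqref{eq:intr.LT} as a uniform tail bound, combined with the fact that $\|q_n\|_{L_1}\to\|q\|_{L_1}$, to get equi-summability of the sequences $\kappa(\cdot;q_n)$ and then invoke a Vitali/Scheff\'e-type argument to promote pointwise convergence plus convergence of sums to $\ell_1$-convergence.

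For the map $q\mapsto h(\cdot;q)$ I would first establish pointwise-a.e.\ (indeed locally uniform on $\mathbb{R}_0$) convergence of the reflection coefficients $r_\pm(k;q_n)\to r_\pm(k;q)$ from the same Jost-solution estimates, which yields $h(k;q_n)\to h(k;q)$ at almost every $k$. The delicate point is to control the $L_1$-norm, because $h$ involves $\log(1-|r_\pm|^2)^{-1}$, which blows up where $|r_\pm|\to1$ (this happens near $k=0$) and whose integrability is not uniform a priori. Here I expect the main obstacle to lie: I would need a majorant or an equi-integrability statement for the family $\{h(\cdot;q_n)\}$ that survives the limit. A natural route is to combine the already-known bound $\|h(\cdot;q)\|_{L_1}\le 3\pi\|q\|_{L_1}$ (valid on the dense subset) with the continuity of the two other terms to control the $L_1$-norm of $h$ from the trace formula itself, but since the trace formula on the full space is what I am trying to prove, I must instead obtain an a priori $L_1$-bound on $h(\cdot;q)$ directly from scattering estimates, uniformly for $q$ in $L_1$-bounded sets. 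Assuming such a bound together with the a.e.\ convergence, another application of a Vitali/generalized-dominated-convergence argument delivers $h(\cdot;q_n)\to h(\cdot;q)$ in $\mathcal{Q}_1$.

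Once both continuity statements are in hand, the conclusion is immediate: given $q\in\mathcal{Q}_1$, choose $q_n$ in the Gesztesy–Holden class with $q_n\to q$ in $L_1$; the trace formula holds for each $q_n$, and passing to the limit term by term using the two continuity results together with continuity of $\int q_n\,dx$ yields~\eqref{eq:intr.FZ} for $q$. I anticipate that the genuinely hard analytic work is concentrated in the low-energy behaviour of the reflection coefficient near $k=0$ and in obtaining the uniform $L_1$-integrability of $h$; the eigenvalue continuity, while requiring the $\ell_1$ tail estimate, should follow more routinely from analyticity of the Jost function and the Lieb–Thirring bound.
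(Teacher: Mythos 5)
Your overall skeleton (extend the Gesztesy--Holden formula from the dense class by continuity) is the same as the paper's, and you correctly locate the danger at $k=0$. But the proposal leaves precisely the two hard points as assumptions, and in both cases the needed estimate does not follow from the tools you cite. First, for $\kappa(\cdot;q)$: the Lieb--Thirring bound~\eqref{eq:intr.LT} gives only a \emph{uniform upper bound} on the full sums $\sum_j\kappa(j;q_n)$, not equi-summability of the tails, and pointwise convergence $\kappa(j;q_n)\to\kappa(j;q)$ plus an upper bound on sums does not exclude loss of mass (many small eigenvalues escaping to $0$), so the Vitali/Scheff\'e upgrade you invoke has nothing to feed on. The paper closes this by a monotonicity trick: after passing to a rapidly convergent subsequence it builds $q_n^-\le q_n\le q_n^+$ with $q_n^\pm$ monotone in $n$ and converging to $q$, sandwiches each $\kappa(j;q_n)$ via the minimax principle, and applies dominated convergence in $j$ (Lemma~\ref{lem:4.2}).

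Second, and more seriously, the ``a priori $L_1$-bound on $h(\cdot;q)$ uniformly on $L_1$-bounded sets'' that your argument hinges on is exactly what is \emph{not} available; the whole difficulty is that $\log|a(\cdot;q_n)|$ could concentrate positive mass at $k=0$ and lose it in the limit. The paper does not prove such a bound. Instead it reverses your logical order: it first proves the trace formula for all $q\in\cQ_1$ and only then deduces $L_1$-continuity of $h$ from it via the Riesz/Scheff\'e lemma. Concretely, Fatou's lemma defines a nonnegative mass defect $\gamma:=\lim_n\int\log|a(k;q_n)|\,dk-\int\log|a(k;q)|\,dk$; passing to the limit in the Poisson--Schwarz factorization~\eqref{eq:proof.PS} (using Lemma~\ref{le.51b} on Cauchy integrals of concentrating sequences and Lemma~\ref{le.32} on Blaschke products) shows that a nonzero $\gamma$ would force an extra singular inner factor $e^{i\gamma/\pi z}$ in $a(z;q)$; and the estimate $\log^+|a(\lambda;q)|=o(\lambda^{-1})$ as $\lambda\to0$ (Theorem~\ref{thm:Jost.a}, proved by a delicate splitting of the potential relative to $|\lambda|$) together with the small-radius Blaschke estimate of Lemma~\ref{le.31} rules this out, giving $\gamma=0$. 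Without an argument of this kind --- or a genuinely new uniform low-energy estimate on the reflection coefficient --- your plan cannot be completed as stated.
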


The paper is organized as follows. In Section~\ref{sec:Jost}, we study properties of the Jost solutions of the equation~$-y''+qy=z^2 y$ for complex~$z$ and establish behaviour of the scattering coefficient~$a$ near the origin. In Section~\ref{sec:aux} some special properties of the Blaschke products and Cauchy integrals are established and, finally, in Section~\ref{sec:proof} we give the proof of Theorem~\ref{thm:main}.


\subsection*{Notations.} Throughout the paper, we denote by $\bC_+$ the open upper-half complex plane and by $\overline{\bC}_+$ its closure. The symbols $\pm$ appearing in a statement are meant to denote two separate statements, one with the sign $+$ and the other with the sign~$-$. 
As usual, $\|\cdot\|_1$ is the norm of~$L_1(\mathbb{R})$; recall that $\cQ_1$ will denote the subset of all real-valued functions in~$L_1(\mathbb{R})$ considered as a real Banach space with norm~$\|\cdot\|_1$.
Finally, to simplify the notations, we shall denote by $r=r(\cdot;q)$ the right reflection coefficient $r_+(\cdot;q)$ corresponding to $q\in L_1(\mathbb{R})$.


\section{Jost solutions}\label{sec:Jost}

Existence of the Jost solutions for the Schr\"odinger operator~$T_q$ along with some useful estimates was established e.g.\ in~\cite{DT}; note that although the authors imposed therein more restrictive assumptions on the potentials, only integrability of~$q$ was used to derive some of the results. We collect them here since we need slightly more general formulations to study continuous dependence on the potential~$q$, in order to set up the notations needed throughout the paper, and for the convenience of the reader. 

\begin{definition}\label{def:Jost.Jost}
	Assume that~$q\in L_1(\bR)$ and $\lambda \in \overline{\bC}_+$.  Solutions  $e_+(\cdot,\lambda)=e_+(\cdot,\lambda;q)$ and $e_-(\cdot,\lambda)=e_-(\cdot,\lambda;q)$ of the equation
	\begin{equation}\label{eq:Jost.eq}
	-f'' +qf=\lambda^2 f
	\end{equation}
	are called respectively its \emph{right} and \emph{left} \emph{Jost solutions} if they satisfy the asymptotic relations
	\begin{equation*}\label{eq:Jost.asymp}
	e^{\mp i\lambda x} e_\pm(x,\lambda) = 1 +o(1),
	\qquad x\to \pm\infty.
	\end{equation*}
\end{definition}

\begin{remark}\label{rem:Jost.left}
	Assume that $q\in L_1(\bR)$ and that $q^\sharp$ satisfies the relation $q^\sharp(x)=q(-x)$ for $x\in\bR$. It then follows from definition that $e_-(x,\lambda; q) = e_+(-x,\lambda; q^\sharp)$. For this reason, it only suffices to study the right Jost solutions. To simplify the notations, we shall write $e(x,\lambda)$ instead of $e_+(x,\lambda;q)$ whenever no confusion can arise.
\end{remark}

As in~\cite{DT}, one shows that the function $m(x,\lambda):= e^{-i\lambda x}e(x,\lambda)$ satisfies the differential equation
\[
m'' + 2i\lambda m'  = q(x)m 
\] 
as well as the integral equation
\begin{equation}\label{eq:Jost.D}
m(x,\lambda) = 1 + \int_{x}^\infty D(t-x,\lambda)q(t)m(t,\lambda)\,dt
\end{equation}
with
\[
D(y,\lambda) := \frac{1}{2i\lambda}\bigl(e^{2i\lambda y} - 1\bigr)
\]
for $y>0$ and $D(y,\lambda)=0$ otherwise.
Thus to construct the Jost solution~$e(\cdot,\lambda)$, it is sufficient to solve the above integral equation for~$m$.

Denote by~$\sX$ the Banach space~$L_\infty(\bR)$ with the standard supremum norm~$\|\cdot\|_\infty$ and by~$\cB(\sX)$ the Banach algebra of all linear continuous operators acting in~$\sX$. Next, set 
\[
\Omega_\varepsilon :=\{z\in \overline{\bC}_+ \mid  |z|>\varepsilon\},  \qquad \varepsilon\ge 0, 
\]	
and introduce for $q\in L_1(\bR)$ and $\lambda\in \Omega_\varepsilon$ an integral operator $\cD_{q,\lambda}$ in~$\sX$ via
\begin{equation*}\label{eq.23}
(\cD_{q,\lambda}f)(x)=\int_x^\infty D(t-x,\lambda)q(t)f(t)\, dt, \qquad x\in\bR.
\end{equation*}

\begin{lemma}\label{le.22}  
	For every $q\in L_1(\bR)$ and $\lambda\in\Omega_0$, the operator $\cD_{q,\lambda}$ is quasi-nilpotent in~$\cB(\sX)$ and
	\begin{equation}\label{eq.24}
	\|\cD^n_{q,\lambda}\|\le \frac1{n!} \left(\frac{\|q\|_{1}}{|\lambda|}\right)^n, \qquad n\in \bN.
	\end{equation}
	Moreover, for a fixed $q\in L_1(\bR)$ the function
	$	\Omega_0\ni\lambda\mapsto \cD_{q,\lambda} \in \cB(\sX)  $
	is analytic in~$\bC_+$ and continuous in~$\Omega_0$.
\end{lemma}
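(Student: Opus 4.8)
The plan is to bound the operator norms of the iterates $\cD_{q,\lambda}^n$ by exhibiting an explicit pointwise kernel estimate and then to establish analyticity/continuity by a standard uniform-convergence argument. First I would record the elementary bound $|D(y,\lambda)| \le \min\{y, 1/|\lambda|\} \le 1/|\lambda|$ for $y>0$, which follows immediately from $|e^{2i\lambda y}-1| \le 2$ on $\overline{\bC}_+$ together with the factor $1/(2|\lambda|)$; in fact on $\Omega_0$ one has $|D(y,\lambda)| \le 1/|\lambda|$ uniformly in $y>0$. This single estimate is the engine for everything that follows.

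For the norm bound \eqref{eq.24} the key step is to iterate the kernel and track the resulting ordered-simplex integral. Writing out $(\cD_{q,\lambda}^n f)(x)$ as an $n$-fold integral over the region $x < t_1 < t_2 < \dots < t_n$, each factor $D(t_{j+1}-t_j,\lambda)$ contributes at most $1/|\lambda|$, so the kernel is dominated by $|\lambda|^{-n}|q(t_1)|\cdots|q(t_n)|$ on that simplex. The crucial combinatorial point is that integrating $|q(t_1)|\cdots|q(t_n)|$ over the ordered simplex $\{x<t_1<\dots<t_n\}$ equals $\tfrac{1}{n!}\bigl(\int_x^\infty |q|\bigr)^n \le \tfrac{1}{n!}\|q\|_1^n$, by symmetrizing over the $n!$ orderings of an $n$-cube. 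Taking the supremum over $x$ and over $\|f\|_\infty \le 1$ then yields exactly \eqref{eq.24}. Quasinilpotency is an immediate consequence, since $\|\cD_{q,\lambda}^n\|^{1/n} \le (\|q\|_1/|\lambda|)\,(n!)^{-1/n} \to 0$.

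For the analyticity and continuity claims I would argue that $\lambda \mapsto \cD_{q,\lambda}$ is, for each fixed $q$, a limit in $\cB(\sX)$ of operator-valued functions that are manifestly nice in $\lambda$. The map $\lambda \mapsto D(y,\lambda)$ is entire (the singularity at $\lambda=0$ is removable since $(e^{2i\lambda y}-1)/(2i\lambda) \to y$), and for fixed $q$ the operator $\cD_{q,\lambda}$ depends on $\lambda$ only through this scalar kernel. The cleanest route is to approximate $q$ by compactly supported bounded potentials $q_k$, for which differentiating under the integral sign in $\lambda$ is unproblematic, and to note via the uniform estimate $\|\cD_{q-q_k,\lambda}\| \le \|q-q_k\|_1/|\lambda|$ that the convergence $\cD_{q_k,\lambda} \to \cD_{q,\lambda}$ is uniform on every compact subset of $\bC_+$; since a locally uniform limit of $\cB(\sX)$-valued analytic functions is analytic, analyticity in $\bC_+$ follows. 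For continuity up to the boundary on $\Omega_0$ I would instead estimate $\|\cD_{q,\lambda} - \cD_{q,\lambda'}\|$ directly by $\int_0^\infty \sup_{y}|D(y,\lambda)-D(y,\lambda')|\,|q(t)|\,dt$-type bounds, splitting the $y$-range near $0$ and away from it and using the equicontinuity of $\{D(\cdot,\lambda)\}_{\lambda \in \Omega_\varepsilon}$ on each interval.

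I expect the main obstacle to be the behaviour of the kernel near the corner $\lambda = 0$: the naive bound $|D(y,\lambda)| \le 1/|\lambda|$ blows up there, so establishing continuity on all of $\Omega_0$ (which \emph{includes} points arbitrarily close to $0$, but not $0$ itself) requires care to see that the potential blow-up is controlled uniformly. The remedy is precisely the sharper estimate $|D(y,\lambda)| \le \min\{y,1/|\lambda|\}$, which keeps the kernel bounded by $y$ for small $y$ regardless of $\lambda$; combined with the dominated-convergence argument over the $t$-integral this tames the boundary behaviour. Everything else is a matter of the routine simplex computation and a standard Vitali/locally-uniform-limit argument, so the conceptual weight of the lemma sits entirely in the single kernel inequality and the $1/n!$ gain from the ordered simplex.
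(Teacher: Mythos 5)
Your proposal is correct and follows essentially the same route as the paper: the uniform kernel bound $|D(y,\lambda)|\le|\lambda|^{-1}$ on $\Omega_0$, the ordered-simplex iteration giving the $1/n!$ factor in \eqref{eq.24}, and standard locally uniform arguments for analyticity in $\bC_+$ and continuity on $\Omega_0$ (the paper simply bounds $\partial_\lambda D$ uniformly on each $\Omega_\varepsilon$ rather than approximating $q$, but this is a cosmetic difference). Your concern about the corner $\lambda=0$ is unnecessary for this lemma, since continuity is a local property on $\Omega_0$ and every point of $\Omega_0$ lies in some $\Omega_\varepsilon$ with $\varepsilon>0$, where $1/|\lambda|$ is uniformly bounded; the sharper bound $|D(y,\lambda)|\le\min\{y,1/|\lambda|\}$ is only needed later, in the proof of Theorem~\ref{thm:Jost.a}.
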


\begin{proof}
	Since
	\begin{equation*}
	\left| D(y,\lambda)\right|\le |\lambda|^{-1}, \qquad y\ge0, \quad \lambda\in\Omega_0,
	\end{equation*}
	for every $f\in \sX$, $q\in L_1(\bR)$, and $\lambda\in\Omega_0$ we get
	\begin{equation}\label{eq:Jost.D-bound}
	|\cD_{q,\lambda}f(x)| \le \frac{\|f\|_\infty}{|\lambda|} \int_x^\infty |q(t)|\,dt,
	\end{equation}
	so that 
	\begin{equation}\label{eq:Jost.Dn-bound}
	\|\cD^n_{q,\lambda}\|
	\le\int_{t_n\le \dots\le t_1} |\lambda|^{-n}
	|q(t_1)|\dots|q(t_n)| \,dt_1\dots dt_n=\frac1{n!} \left(\frac{\|q\|_{1}}{|\lambda|}\right)^n .
	\end{equation}
	Therefore, $\cD_{q,\lambda}$ is quasi-nilpotent.
	
	Taking into account that for every $\varepsilon>0$  there exists $C_\varepsilon>0$ such that 
	\[   
	\Bigl|\frac{d}{d\lambda} D(y,\lambda)\Bigr|\le C_\varepsilon
	\]
	for all $y\ge 0$ and all $\lambda\in \Omega_\varepsilon$, we conclude that the function $\lambda\mapsto \cD_{q,\lambda}$ is analytic in~$\bC_+$.
	Finally, continuity of the mapping $\lambda \mapsto D(\cdot,\lambda)$ from $\Omega_0$ onto $\sX$ implies that the function~$\lambda\mapsto \cD_{q,\lambda}$ is continuous on~$\Omega_0$ by virtue of the estimate similar to~\eqref{eq:Jost.D-bound}.
\end{proof}

Denote by $\cI$ the identity operator in~$\cB(\sX)$; the above lemma implies that the operator $\cI-\cD_{q,\lambda}$ is boundedly invertible in~$\cB(\sX)$ for suitable $q$ and $\lambda$. Set 
\begin{equation}\label{eq.26}
m_q(\cdot,\lambda):=(\cI-\cD_{q,\lambda})^{-1}m_0,    \qquad q\in L_1(\bR), \quad \lambda\in\Omega_0,
\end{equation}
with $m_0\equiv 1 \in\sX$; as we mentioned at the beginning of this section, the function~$m_q$ is related to the right Jost solution $e(\,\cdot\,,\lambda)$. We start by establishing some bounds on~$m_q$.

\begin{lemma}\label{le.23}  Assume that $q\in L_1(\bR)$. Then the function
	$\Omega_0\ni\lambda\mapsto m_q(\cdot,\lambda)\in \sX$
	is analytic in the half-plane~$\bC_+$ and continuous on~$\Omega_0$. In particular, for every $\lambda\in \Omega_0$
	\begin{subequations}\label{eq.27}
	\begin{align}
	\|m_q(\cdot,\lambda)\|_\infty\le &   \exp\{\|q\|_{1}/|\lambda|\},    \\
	\|m_q(\cdot,\lambda)-1\|_\infty\le & \frac{\|q\|_{1}}{|\lambda|} \exp\{\|q\|_{1}/|\lambda|\}.                                                                                                                        
	\end{align}
	\end{subequations}
	Moreover, if $q,\wt q\in L_1(\bR)$ and $\|q\|_1,\|\wt q\|_1 \le \alpha\,\,(\alpha>0)$, then
	\begin{equation}\label{eq.28}
	\|m_q(\cdot,\lambda)-m_{\wt q}(\cdot,\lambda)\|_\infty\le \exp\{(2\alpha/|\lambda|)\} \frac{\|q-\wt q\|_{1}}{|\lambda|}, \qquad \lambda\in \Omega_0.
	\end{equation}
\end{lemma}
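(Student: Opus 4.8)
The plan is to exploit the Neumann-series representation of $(\cI-\cD_{q,\lambda})^{-1}$ made available by the quasi-nilpotency from Lemma~\ref{le.22}. Because the geometric-type series built from the bounds~\eqref{eq.24} sums to an exponential, all three assertions should follow from termwise estimation, supplemented by a resolvent-type identity for the last one. First I would write, for each $\lambda\in\Omega_0$,
\[
m_q(\cdot,\lambda)=(\cI-\cD_{q,\lambda})^{-1}m_0=\sum_{n=0}^\infty \cD_{q,\lambda}^n m_0,
\]
the series converging in~$\sX$ since, by~\eqref{eq.24}, $\sum_{n\ge0}\|\cD_{q,\lambda}^n\|\le \exp\{\|q\|_1/|\lambda|\}<\infty$. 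On each set $\Omega_\varepsilon$ with $\varepsilon>0$ the estimate $\|\cD_{q,\lambda}^n\|\le (n!)^{-1}(\|q\|_1/\varepsilon)^n$ is uniform in~$\lambda$, so the Weierstrass $M$-test yields uniform convergence of the series on~$\Omega_\varepsilon$.

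Next I would address analyticity and continuity. By Lemma~\ref{le.22} the map $\lambda\mapsto\cD_{q,\lambda}$ is analytic in~$\bC_+$ and continuous on~$\Omega_0$, hence so is each power $\lambda\mapsto\cD_{q,\lambda}^n$, since finite products of $\cB(\sX)$-valued analytic, resp.\ continuous, functions are again of the same type. A locally uniform limit of $\sX$-valued analytic functions is analytic, and a uniform limit of continuous functions is continuous; as $\varepsilon>0$ is arbitrary, $\lambda\mapsto m_q(\cdot,\lambda)$ is therefore analytic on~$\bC_+$ and continuous on~$\Omega_0$. The first bound in~\eqref{eq.27} is then immediate from $\|m_q(\cdot,\lambda)\|_\infty\le\sum_{n\ge0}\|\cD_{q,\lambda}^n\|\,\|m_0\|_\infty\le\exp\{\|q\|_1/|\lambda|\}$, while the second follows by writing $m_q(\cdot,\lambda)-1=\cD_{q,\lambda}m_q(\cdot,\lambda)$ and applying $\|\cD_{q,\lambda}\|\le\|q\|_1/|\lambda|$ to the already-bounded~$m_q$.

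For the difference estimate~\eqref{eq.28} the key observation is that $\cD_{q,\lambda}$ is \emph{linear} in~$q$, so $\cD_{q,\lambda}-\cD_{\wt q,\lambda}=\cD_{q-\wt q,\lambda}$. Combining this with the second resolvent identity
\[
(\cI-\cD_{q,\lambda})^{-1}-(\cI-\cD_{\wt q,\lambda})^{-1}
=(\cI-\cD_{q,\lambda})^{-1}\,\cD_{q-\wt q,\lambda}\,(\cI-\cD_{\wt q,\lambda})^{-1}
\]
and applying it to~$m_0$ gives $m_q(\cdot,\lambda)-m_{\wt q}(\cdot,\lambda)=(\cI-\cD_{q,\lambda})^{-1}\cD_{q-\wt q,\lambda}\,m_{\wt q}(\cdot,\lambda)$. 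Estimating the three factors respectively by $\exp\{\|q\|_1/|\lambda|\}\le\exp\{\alpha/|\lambda|\}$, by $\|q-\wt q\|_1/|\lambda|$, and by $\exp\{\|\wt q\|_1/|\lambda|\}\le\exp\{\alpha/|\lambda|\}$ produces exactly~\eqref{eq.28}.

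I expect the only genuine subtlety to be the bookkeeping near the origin: the bounds in~\eqref{eq.24} blow up as $\lambda\to0$, so the Neumann series converges merely locally uniformly on~$\bC_+$ and uniformly only on each~$\Omega_\varepsilon$. This is precisely the reason analyticity is asserted on the open half-plane and continuity only on~$\Omega_0$, and it is what forces the $1/|\lambda|$ factors in the estimates; away from that point every step above is routine.
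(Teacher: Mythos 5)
Your proposal is correct and follows essentially the same route as the paper: the Neumann series with the factorial bounds~\eqref{eq.24} gives $\|(\cI-\cD_{q,\lambda})^{-1}\|\le\exp\{\|q\|_1/|\lambda|\}$ and hence~\eqref{eq.27}, and the second resolvent identity yields~\eqref{eq.28}. Your derivation of the second bound via $m_q(\cdot,\lambda)-1=\cD_{q,\lambda}m_q(\cdot,\lambda)$ and your more explicit locally-uniform-convergence argument for analyticity are only cosmetic variations on what the paper does.
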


\begin{proof} The fact that the function $\lambda\mapsto m_q(\cdot,\lambda)$ is analytic in $\bC_+$
	and continuous in $\Omega_0$ follows from Lemma~\ref{le.22}. Next, on account of~\eqref{eq.24}
	and the equality $(\cI-\cD_{q,\lambda})^{-1}=\sum_{n=0}^\infty \cD_{q,\lambda}^n$, one gets
	\begin{equation}\label{eq.29}
	\|(\cI-\cD_{q,\lambda})^{-1}\|
	\le \sum_{n=0}^\infty \frac1{n!} \left(\frac{\|q\|_1}{|\lambda|}\right)^n
	= \exp\{\|q\|_1/|\lambda|\},
	\end{equation}
	which in view of~\eqref{eq.26} produces the bounds \eqref{eq.27} in a straightforward manner.
	
	Now the second resolvent identity
	\[   
	(\cI-\cD_{q,\lambda})^{-1}-(\cI-\cD_{\wt q,\lambda})^{-1}
	= (\cI-\cD_{q,\lambda})^{-1}(\cD_{q,\lambda}-\cD_{\wt q,\lambda} )(\cI-\cD_{\wt q,\lambda})^{-1}
	\]
	on account of~\eqref{eq.26}, \eqref{eq:Jost.Dn-bound}, and \eqref{eq.29} results in the estimate
	\begin{multline*}
	\|m_q(\cdot,\lambda)-m_{\wt q}(\cdot,\lambda)\|_\infty
	\le \|(\cI-\cD_{q,\lambda})^{-1}\| \,\|(\cI-\cD_{\wt q,\lambda})^{-1}\|\, 
	\|\cD_{q-\wt q ,\lambda}\| \\
	\le \exp\{(2\alpha/|\lambda|)\}\|\cD_{q-\wt q,\lambda}\|
	\le \exp\{(2\alpha/|\lambda|)\} \frac{\|q-\wt q\|_1}{|\lambda|}.
	\end{multline*}
	The proof is complete.
\end{proof}

\begin{corollary}\label{cor:Jost.e}
	For every $q \in L_1(\bR)$ and $\lambda \in \Omega_0$, the function $e^{i\lambda x}m_q(x,\lambda)$ is the right Jost solution $e(x,\lambda;q)$ of equation~\eqref{eq:Jost.eq}. Moreover, as $x\to+\infty$,
	\[
	e'(x,\lambda;q)  = i\lambda e^{i\lambda x} + o(1).
	\]
\end{corollary}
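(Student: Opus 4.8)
The plan is to verify the two assertions of Corollary~\ref{cor:Jost.e} directly from the construction of $m_q$ in~\eqref{eq.26} together with the integral equation~\eqref{eq:Jost.D}. For the first assertion, I would begin by recalling that $m_q(\cdot,\lambda)=(\cI-\cD_{q,\lambda})^{-1}m_0$ by definition solves the fixed-point equation $m_q = m_0 + \cD_{q,\lambda}m_q$, which is exactly the integral equation~\eqref{eq:Jost.D} written for $m=m_q$ and $m_0\equiv 1$. Setting $e(x,\lambda;q):=e^{i\lambda x}m_q(x,\lambda)$, a short computation substituting $m(x,\lambda)=e^{-i\lambda x}e(x,\lambda)$ back into the differential equation $m''+2i\lambda m' = q(x)m$ (recorded just after Remark~\ref{rem:Jost.left}) shows that $e(\cdot,\lambda;q)$ solves the original Schr\"odinger equation~\eqref{eq:Jost.eq}. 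Thus I must check that $e$ has the correct asymptotics, i.e.\ that $e^{-i\lambda x}e(x,\lambda)=m_q(x,\lambda)\to 1$ as $x\to+\infty$; this follows from the integral equation, since the remainder integral in~\eqref{eq:Jost.D} is controlled by $\int_x^\infty|q(t)|\,dt$, which tends to $0$ by integrability of~$q$, while $\|m_q(\cdot,\lambda)\|_\infty$ is bounded by~\eqref{eq.27}. Hence $e(x,\lambda;q)$ is precisely the right Jost solution in the sense of Definition~\ref{def:Jost.Jost}.

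For the derivative asymptotics, I would differentiate $e(x,\lambda;q)=e^{i\lambda x}m_q(x,\lambda)$ to obtain
\[
e'(x,\lambda;q)=i\lambda e^{i\lambda x}m_q(x,\lambda)+e^{i\lambda x}m_q'(x,\lambda)
= i\lambda e^{i\lambda x}+ i\lambda e^{i\lambda x}\bigl(m_q(x,\lambda)-1\bigr)+e^{i\lambda x}m_q'(x,\lambda).
\]
The middle term is $o(1)$ because $m_q(x,\lambda)-1\to 0$ as $x\to+\infty$, as just established, and $|e^{i\lambda x}|\le 1$ for $\lambda\in\overline{\bC}_+$. For the last term, I would differentiate the integral equation~\eqref{eq:Jost.D} in $x$: since $D(0,\lambda)=0$, the boundary term from differentiating the lower limit vanishes and one is left with
\[
m_q'(x,\lambda)=-\int_x^\infty \partial_x D(t-x,\lambda)\,q(t)m_q(t,\lambda)\,dt
=-\int_x^\infty e^{2i\lambda(t-x)}q(t)m_q(t,\lambda)\,dt,
\]
using $\partial_x D(t-x,\lambda)=-e^{2i\lambda(t-x)}$. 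The modulus of this integral is bounded by $\|m_q(\cdot,\lambda)\|_\infty\int_x^\infty|q(t)|\,dt$, which again tends to $0$ as $x\to+\infty$. Therefore $e^{i\lambda x}m_q'(x,\lambda)=o(1)$, and combining the three terms yields $e'(x,\lambda;q)=i\lambda e^{i\lambda x}+o(1)$.

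The only genuinely delicate point is justifying that $m_q(\cdot,\lambda)$ is differentiable in~$x$ and that differentiation under the integral sign in~\eqref{eq:Jost.D} is legitimate, since $q$ is merely integrable and not continuous. I expect this to be the main obstacle, and I would handle it by first noting that $m_q(\cdot,\lambda)\in\sX=L_\infty(\bR)$ and that the right-hand side of~\eqref{eq:Jost.D} is then an absolutely continuous function of~$x$ (an integral of an $L_1$ function against a bounded, Lipschitz-in-$x$ kernel), so that $m_q'$ exists almost everywhere and is given by the formula above; the asymptotic estimate only requires the tail bound, which is insensitive to these regularity subtleties. With the differentiated integral representation in hand, the convergence statements reduce to the elementary fact that $\int_x^\infty|q(t)|\,dt\to 0$, completing the proof.
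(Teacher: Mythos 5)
Your proposal is correct and follows essentially the same route as the paper: both verify that $e^{i\lambda x}m_q(x,\lambda)$ satisfies the Schr\"odinger equation and the asymptotics via the integral equation~\eqref{eq:Jost.D}, and both obtain the derivative asymptotics by differentiating~\eqref{eq:Jost.D} to get $m_q'(x,\lambda)=-\int_x^\infty e^{2i\lambda(t-x)}q(t)m_q(t,\lambda)\,dt$ and bounding it by the tail $\int_x^\infty|q(t)|\,dt\to 0$. Your extra care about differentiability a.e.\ of the integral term is a point the paper passes over silently, but it does not change the argument.
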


\begin{proof}
	In view of~\eqref{eq:Jost.D}, the function $e^{i\lambda x}m_q(x,\lambda)=:h(x)$ for $\lambda\in\Omega_0$ satisfies the equality
	\begin{equation}\label{eq.215}
	h(x) = e^{i\lambda x} + \int_x^\infty \frac{\sin{\lambda(t-x)}}{\lambda}q(t)h(t)\, dt, 
	\qquad x\in\bR.
	\end{equation}
	Direct verification shows that $h$ solves~\eqref{eq:Jost.eq} and obeys the required asymptotics and thus is the right Jost solution $e(x,\lambda;q)$ of~\eqref{eq:Jost.eq}. 	Differentiating~\eqref{eq:Jost.D} in $x$, we find that 
	\[
	m'_q(x,\lambda) = -\int_x^\infty e^{2i\lambda(t-x)} q(t) m_q(t,\lambda)\,dt = o(1) 
	\]
	as $x\to+\infty$. Therefore, 
	\[
	e'(x,\lambda; q) - i\lambda e(x,\lambda; q) = e^{i\lambda x} m'_q(x,\lambda) = o(1)
	\]
	as $x \to+\infty$, which completes the proof.
\end{proof} 

Denote by $\cA$ the set of all functions that are analytic in $\bC_+$ and continuous and bounded in $\Omega_\varepsilon$ for every~$\varepsilon>0$. Further, let $\cC$ denote the set of all continuous complex-valued functions on 
$\bR_0:=\bR\setminus\{0\}$ that are bounded on $\bR_\varepsilon:=\bR\setminus(-\varepsilon,\varepsilon)$ for every $\varepsilon>0$. 
The sets $\cA$ and  $\cC$ are endowed with the topology of uniform convergence on compact sets of $\Omega_0$ and $\bR_0$ respectively.

\begin{lemma}\label{lem:Jost.a} 
	For every $q\in L_1(\bR)$ and $\lambda\in \bR_0$, 
	\begin{equation}\label{eq.211}
	e(x,\lambda;q)=  a(\lambda,q) e^{i\lambda x}+ b(\lambda,q) e^{-i\lambda x}+o(1),  \qquad x\to -\infty,
	\end{equation}
	where
	\begin{equation}\label{eq.212}
	a(\lambda,q):=1 - \frac1 {2i\lambda}\int_{-\infty}^\infty q(t)m_q(t,\lambda)\, dt,  \qquad \lambda\in \Omega_0,
	\end{equation}
	and
	\begin{equation}\label{eq.213}
	b(\lambda,q):=\frac1 {2i\lambda} \int_{-\infty}^\infty
	e^{2i\lambda t}
	q(t)m_q(t,\lambda)\, dt , \qquad \lambda\in \bR_0.
	\end{equation}
	Moreover, formulae \eqref{eq.212} and \eqref{eq.213} define continuous mappings
	\begin{equation}\label{eq.214}
	L_1(\bR)\ni q \mapsto a(\cdot,q)\in \cA, \qquad   L_1(\bR)\ni q \mapsto b(\cdot,q)\in \cC.
	\end{equation}
\end{lemma}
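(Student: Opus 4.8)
The plan is to read off the coefficients $a$ and $b$ directly from the integral representation of the Jost solution, and then to transfer the quantitative estimates of Lemma~\ref{le.23} to the claimed continuity of the maps in~\eqref{eq.214}.

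First I would insert $e(x,\lambda;q)=e^{i\lambda x}m_q(x,\lambda)$ into the integral equation~\eqref{eq:Jost.D} and expand the kernel $D(t-x,\lambda)=\frac{1}{2i\lambda}(e^{2i\lambda(t-x)}-1)$ to obtain, for $\lambda\in\bR_0$,
\begin{equation*}
e(x,\lambda;q)=e^{i\lambda x}\Bigl(1-\frac{1}{2i\lambda}\int_x^\infty q(t)m_q(t,\lambda)\,dt\Bigr)
+e^{-i\lambda x}\,\frac{1}{2i\lambda}\int_x^\infty e^{2i\lambda t}q(t)m_q(t,\lambda)\,dt.
\end{equation*}
As $x\to-\infty$ the two integrals converge to the full-line integrals defining $a(\lambda,q)$ and $b(\lambda,q)$; since $|e^{\pm i\lambda x}|=|e^{2i\lambda t}|=1$ for real $\lambda$, the incurred errors are controlled by $\frac{1}{|\lambda|}\|m_q(\cdot,\lambda)\|_\infty\int_{-\infty}^x|q(t)|\,dt$, which tends to $0$ by integrability of $q$ and the bound~\eqref{eq.27} on $\|m_q(\cdot,\lambda)\|_\infty$. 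This yields~\eqref{eq.211} together with formulae~\eqref{eq.212} and~\eqref{eq.213}.

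Next I would check the target memberships. For $a$, note that $f\mapsto\int_{\bR}q(t)f(t)\,dt$ is a bounded linear functional on $\sX=L_\infty(\bR)$ of norm at most $\|q\|_1$; composing it with the $\sX$-valued map $\lambda\mapsto m_q(\cdot,\lambda)$, which is analytic in $\bC_+$ and continuous on $\Omega_0$ by Lemma~\ref{le.23}, shows that $\lambda\mapsto a(\lambda,q)$ is analytic in $\bC_+$ and continuous on $\Omega_0$. The bound $\|m_q(\cdot,\lambda)\|_\infty\le\exp(\|q\|_1/\varepsilon)$ on $\Omega_\varepsilon$ then gives $|a(\lambda,q)-1|\le\frac{\|q\|_1}{2\varepsilon}\exp(\|q\|_1/\varepsilon)$, so $a(\cdot,q)\in\cA$. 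For $b$ the factor $e^{2i\lambda t}$ is bounded only when $\lambda$ is real (it grows like $e^{-2(\myIm\lambda)t}$ as $t\to-\infty$ for $\myIm\lambda>0$), so $b$ is defined only on $\bR_0$; there the same estimate bounds $|b(\lambda,q)|$ on each $\bR_\varepsilon$, and continuity in $\lambda$ follows from dominated convergence, giving $b(\cdot,q)\in\cC$.

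Finally I would establish continuity of the two maps. For $q,\wt q$ with $\|q\|_1,\|\wt q\|_1\le\alpha$ I would use the splitting $qm_q-\wt q\,m_{\wt q}=(q-\wt q)m_q+\wt q\,(m_q-m_{\wt q})$ together with the difference estimate~\eqref{eq.28} to get, on $\Omega_\varepsilon$ respectively $\bR_\varepsilon$,
\begin{equation*}
|a(\lambda,q)-a(\lambda,\wt q)|,\ |b(\lambda,q)-b(\lambda,\wt q)|
\le\frac{\|q-\wt q\|_1}{2\varepsilon}\Bigl(e^{\alpha/\varepsilon}+\frac{\alpha}{\varepsilon}\,e^{2\alpha/\varepsilon}\Bigr).
\end{equation*}
Since every compact subset of $\Omega_0$ (respectively $\bR_0$) lies in some $\Omega_\varepsilon$ (respectively $\bR_\varepsilon$), these bounds force uniform convergence on compacta as $\|q-\wt q\|_1\to0$, which is exactly convergence in the topologies of $\cA$ and $\cC$. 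The one point requiring care is the asymmetry between $a$ and $b$: because $e^{2i\lambda t}$ is unbounded off the real axis, $b$ admits no analytic continuation into $\bC_+$ and must be treated as an element of $\cC$ rather than $\cA$. This is the only genuine obstacle; everything else reduces to the uniform-in-$\lambda$ estimates already furnished by Lemma~\ref{le.23}.
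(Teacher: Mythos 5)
Your proposal is correct and follows essentially the same route as the paper: you read off $a$ and $b$ from the integral equation~\eqref{eq:Jost.D} exactly as the paper rewrites~\eqref{eq.215}, and you derive the continuity of the maps in~\eqref{eq.214} from the bounds of Lemma~\ref{le.23}, merely supplying the tail estimate and the $a$/$b$ asymmetry that the paper leaves implicit.
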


\begin{proof} 
	Rewriting the equality \eqref{eq.215} in the form
	\begin{equation*}
	e(x,\lambda;q)
	= e^{i\lambda x} - \frac{e^{i\lambda x}} {2i\lambda}\int_x^\infty q(t)m_q(t,\lambda)\, dt 
	+ \frac{e^{-i\lambda x}} {2i\lambda} \int_x^\infty e^{2i\lambda t}  q(t)m_q(t,\lambda)\, dt,
	\end{equation*}
	we arrive at~\eqref{eq.211}.
	
	Continuity of the mappings of~\eqref{eq.214} is a direct corollary of Lemma~\ref{le.23}.
\end{proof}

\begin{remark}\label{re.25}
	As is known from the classical scattering theory \cite[Ch.~3.5]{Mar}, for potentials~$q$ from the Faddeev--Marchenko class
	\begin{equation*}\label{eq.216}
	\cQ_{1,1}:=\Bigl\{q\in\cQ_1 \mid \int_{\bR}(1+|x|)|q(x)|\,dx <\infty \Bigr\}
	\end{equation*}
	we have the identity
	\begin{equation}\label{eq.217}
	|a(\lambda,q)|^2-|b(\lambda,q)|^2=1 , \qquad
	\lambda\in \bR_0.
	\end{equation}
	Since the set $\cQ_{1,1}$ is everywhere dense in $\cQ_1$, continuity of the mappings~\eqref{eq.214} yields the identity~\eqref{eq.217} for all~$q\in\cQ_1$.
	This also implies that the mappings 
	\begin{align*}
	\cQ_1\ni q &\mapsto r_-(\cdot,q):=\frac{b(k,q)}{a(\lambda,q)} \in \cC,  \\ \cQ_1\ni q &\mapsto h(\cdot,q):= \log(1-|r_-(\cdot,q)|^2)\in \cC
	\end{align*}   
	are continuous. 
\end{remark}

As usual, we denote by $\log^+$ the positive part of $\log$, i.e., for positive $x$ we set $\log^+(x) = \log x$ if $\log x>0$ and $\log^+(x) = 0$ otherwise. 
The main result of this section is the following theorem.

\begin{theorem}\label{thm:Jost.a}
	Assume that $q\in \cQ_1$; then 
	\begin{equation}\label{eq:jost.a-asympt}
	\log^+ |a(\lambda;q)|=o(\lambda^{-1}), \qquad \bC_+\ni\lambda \to 0.
	\end{equation}
\end{theorem}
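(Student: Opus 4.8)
The plan is to prove the equivalent statement that $\limsup_{\bC_+\ni\lambda\to0}|\lambda|\,\log^+|a(\lambda;q)|\le\varepsilon$ for every $\varepsilon>0$, and to do so by splitting the potential into a compact bulk and two arbitrarily light tails. Fix $\varepsilon>0$ and pick $R>0$ with $\int_{|x|>R}|q|\,dx<\varepsilon$; set $q_L:=q\chi_{(-\infty,-R)}$, $q_M:=q\chi_{[-R,R]}$, $q_R:=q\chi_{(R,\infty)}$, so that $q=q_L+q_M+q_R$ and $\|q_L\|_1+\|q_R\|_1<\varepsilon$. The naive estimate, obtained by inserting the bound $\|m_q(\cdot,\lambda)\|_\infty\le\exp\{\|q\|_1/|\lambda|\}$ of Lemma~\ref{le.23} into~\eqref{eq.212}, gives only $\log^+|a(\lambda;q)|=O(|\lambda|^{-1})$ with the constant $\|q\|_1$, and overcoming exactly this loss is the crux: the whole exponent $\|q\|_1$ must be downgraded to the small tail mass, the bulk being allowed to enter only through subexponential factors. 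To localise, I would use the identity $2i\lambda\,a(\lambda;q)=W[e_-(\cdot,\lambda;q),e_+(\cdot,\lambda;q)]$ for the ($x$-independent) Wronskian $W[f,g]=fg'-f'g$, which holds on $\bR_0$ by~\eqref{eq.211} and extends to $\bC_+$ by analyticity, and evaluate it at the left edge $x=-R$ of the bulk.

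The key structural fact is causality of~\eqref{eq:Jost.D}: for each $c$ the restriction $m_q|_{[c,\infty)}$ coincides with $m_{q\chi_{[c,\infty)}}|_{[c,\infty)}$, so Lemma~\ref{le.23} applied to the truncated potential yields the localised bound $|m_q(x,\lambda)|\le\exp\{|\lambda|^{-1}\int_x^\infty|q(t)|\,dt\}$, together with its mirror from $-\infty$ for $e_-$ via Remark~\ref{rem:Jost.left}. Since $e_+(x,\lambda;q)=e_+(x,\lambda;q_R)$ for $x\ge R$ and $\int_R^\infty|q_R|=\|q_R\|_1$, the values $|e_+(R,\lambda;q)|$ and (using the derivative formula in the proof of Corollary~\ref{cor:Jost.e}) $|e_+'(R,\lambda;q)|$ are bounded by $C_\varepsilon\exp\{\|q_R\|_1/|\lambda|\}$ as $\lambda\to0$. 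Symmetrically, $e_-(\cdot,\lambda;q)=e_-(\cdot,\lambda;q_L)$ on $(-\infty,-R]$, so $|e_-(-R,\lambda;q)|,|e_-'(-R,\lambda;q)|\le C_\varepsilon\exp\{\|q_L\|_1/|\lambda|\}$.

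For the compact bulk I would transport the data of $e_+$ from $R$ to $-R$ not by the exponential Jost bound but by the fundamental matrix $\Psi(-R,R;\lambda)$ of the first-order system associated with $-y''+q_My=\lambda^2y$. Because $q_M\in L_1$ is compactly supported, Gronwall's inequality gives $\|\Psi(-R,R;\lambda)\|\le\exp\{2R(1+|\lambda|^2)+\|q_M\|_1\}$, a quantity that stays bounded by a constant $C_{q_M}$ as $\lambda\to0$; this is precisely the step in which the bulk fails to produce an exponential factor. Hence $|e_+(-R,\lambda;q)|,|e_+'(-R,\lambda;q)|\le C_{q_M}C_\varepsilon\exp\{\|q_R\|_1/|\lambda|\}$, and substituting the four edge estimates into the Wronskian at $x=-R$ gives
\[
2|\lambda|\,|a(\lambda;q)|=|W[e_-,e_+]|\le 2C_{q_M}C_\varepsilon^2\exp\{(\|q_L\|_1+\|q_R\|_1)/|\lambda|\}\le C\exp\{\varepsilon/|\lambda|\},
\]
with $C$ independent of $\lambda$. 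Consequently $\log^+|a(\lambda;q)|\le \varepsilon/|\lambda|+\log^+\!\big(C/(2|\lambda|)\big)=\varepsilon/|\lambda|+O(\log(1/|\lambda|))$, whence $\limsup_{\bC_+\ni\lambda\to0}|\lambda|\,\log^+|a(\lambda;q)|\le\varepsilon$. As $\varepsilon>0$ is arbitrary, the limit is $0$, which is~\eqref{eq:jost.a-asympt}. The single obstacle, flagged above, is that the global bound of Lemma~\ref{le.23} is too crude near the origin; the argument succeeds only because on the compact part of the support it can be replaced by the boundedly growing fundamental solution of the differential equation, leaving the genuinely exponential growth confined to the negligible tails.
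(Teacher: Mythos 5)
Your argument is correct, but it takes a genuinely different route from the paper's. The paper never leaves the integral-operator framework: it splits $q=q_1+q_2+q_3$ at the \emph{$\lambda$-dependent} points $\pm|\lambda|^{-1}$, uses the causal (Volterra) structure to factor $\cI-\cD_{q,\lambda}=(\cI-\cD_{q_1,\lambda})(\cI-\cD_{q_2,\lambda})(\cI-\cD_{q_3,\lambda})$, bounds the tail factors by $\exp\{\|q_j\|_1/|\lambda|\}$ (with $\|q_1\|_1+\|q_3\|_1=\int_{|t|\ge|\lambda|^{-1}}|q|=o(1)$), and tames the growing middle region by the sharper kernel bound $|D(y,\lambda)|\le y$ together with an AM--GM estimate on the iterated kernels, which yields $\|(\cI-\cD_{q_2,\lambda})^{-1}\|\le\exp\{3\sqrt{\|q\|_1/|\lambda|}\}$; this controls $\|m_q(\cdot,\lambda)\|_\infty$ directly and hence $a$ via~\eqref{eq.212}. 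You instead use a \emph{fixed} splitting at $\pm R$ with $\varepsilon$-small tails, replace the middle estimate by the Gronwall bound on the fundamental matrix of the ODE over the compact bulk, and reassemble through the Wronskian identity $2i\lambda a=W[e_-,e_+]$ evaluated at $x=-R$. Both the locality claim ($m_q$ on $[c,\infty)$ depends only on $q\chi_{[c,\infty)}$, immediate from~\eqref{eq:Jost.D}) and the Gronwall step are sound; just note that the Wronskian representation of $a$ for complex $\lambda$ requires the analytic-continuation argument the paper only carries out in Section~\ref{sec:proof} (no circularity, but it is an extra ingredient, as is the left Jost solution). The trade-off: your proof is softer and more classical in flavor but purely qualitative, whereas the paper's stays self-contained within Section~\ref{sec:Jost} and produces the explicit rate $\log\|(\cI-\cD_{q,\lambda})^{-1}\|\le|\lambda|^{-1}\int_{|t|\ge|\lambda|^{-1}}|q|+3\sqrt{\|q\|_1/|\lambda|}$, which quantifies the $o(\lambda^{-1})$ in terms of the tail decay of $q$.
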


\begin{proof} In view of~\eqref{eq.212}, it suffices to prove that
	\[
	\log^+\{\|m_q(\cdot,\lambda)\|_\infty\} = o(\lambda^{-1}), \qquad \bC_+\ni\lambda \to 0;
	\]
	according to~\eqref{eq.26}, it is enough to show that
	\begin{equation*}\label{eq.221}
	\log^+ \|(\cI-\cD_{q,\lambda})^{-1}\|=o(\lambda^{-1}), \qquad \bC_+\ni\lambda \to 0.
	\end{equation*}
	
	We fix $q\in L_1(\bR)$ and $\lambda\in \bC_+$ and introduce potentials $q_j:=\chi_j q$, $j=1,2,3$, where $\chi_j$ is the characteristic function of the following sets~$A_j$:
	\[
	A_1:=(|\lambda|,+\infty), \qquad  
	A_2:=[-|\lambda|,|\lambda|], \qquad
	A_3:=(-\infty,-|\lambda|).
	\]    
	Observe that $q=q_1+q_2+q_3$, so that 
	\begin{equation*}
	\cD_{q,\lambda}=\cD_{q_1,\lambda}+\cD_{q_2,\lambda}+\cD_{q_3,\lambda}.
	\end{equation*}
	In addition, it follows that 
	\begin{equation*}
	\cD_{q_1,\lambda}\,\cD_{q_2,\lambda}=\cD_{q_1,\lambda}\,\cD_{q_3,\lambda}=
	\cD_{q_2,\lambda}\,\cD_{q_3,\lambda}=0,
	\end{equation*}
	yielding the relations 
	\[
	(\cI-\cD_{q,\lambda})=(\cI-\cD_{q_1,\lambda})(\cI-\cD_{q_2,\lambda})(\cI-\cD_{q_3,\lambda}),
	\]
	and
	\begin{equation}\label{eq.222}
	\|(\cI-\cD_{q,\lambda})^{-1}\|
	\le \|(\cI-\cD_{q_1,\lambda})^{-1}\|\,
	\|(\cI-\cD_{q_2,\lambda})^{-1}\|\,
	\|(\cI-\cD_{q_3,\lambda})^{-1}\|.
	\end{equation}
	Estimate~\eqref{eq.29} gives the bounds 
	\begin{subequations}\label{eq.223}
	\begin{align}
	\|(\cI-\cD_{q_1,\lambda})^{-1}\|&\le \exp\{\|q_1\|_1/|\lambda|\},\\
	\|(\cI-\cD_{q_3,\lambda})^{-1}\|&\le \exp\{\|q_3\|_1/|\lambda|\};
	\end{align}
	\end{subequations}
	but we shall bound the norm~$\|(\cI-\cD_{q_2,\lambda}^{-1})\|$ in a different way. 
	
	Namely, for every $\lambda\in \Omega_0$ and $u\ge0$ we have
	\[
	\left|\frac{e^{2i\lambda u} -1}{2i\lambda}\right| =
	\left|\int_0^u e^{2i\lambda \xi}\, d\xi\right|\le u,
	\]
	which yields the inequality
	\begin{equation*}
	\left| D(t-x,\lambda)\right|\le t-x, \qquad
	x\le t, \quad \lambda\in\Omega_0.
	\end{equation*}
	As a result, for every $n\in\bN$ it holds that
	\[
	\|\cD^n_{q_2,\lambda}\|
	\le C_n(\lambda) \int\limits_{\Pi_{n,\lambda}}  |q(t_1)| \cdots |q(t_n)|\, dt_1\dots dt_n
	\le C_n(\lambda)\frac{\|q\|_1^n}{n!},
	\]
	where
	\[
	\Pi_{n,\lambda}:=\{(t_1,\dots,t_n)\in\bR^n \mid |\lambda|^{-1}\ge t_1\ge \dots t_n\ge -|\lambda|^{-1}|\}	
	\]
	and
	\[  
	C_n(\lambda):= \max_{\Pi_{n,\lambda}}  |\lambda|^{-1}\prod_{j=1}^{n-1} (t_j-t_{j+1}).		
	\]
	Since the geometric mean is less than or equal to the arithmetic one, we conclude that
	\[	
	C_n(\lambda) \le \left(\frac{3|\lambda|^{-1}}{n}\right)^{n},
	\]
	whence
	\begin{equation*}
	\|(\cI-\cD_{q_2,\lambda})^{-1}\|
	\le \sum\limits_{n=0}^\infty \|\cD^n_{q_2,\lambda}\|
	\le 1+ \sum_{n=1}^\infty\frac{(3|\lambda|^{-1}\|q\|_1)^n}{n^{n}n!}.
	\end{equation*}
	Since
	\[    
	(2n)^n n! \ge (2n)! ,  \qquad n\in\bN,
	\]
	we find that  
	\begin{equation}\label{eq.224}
	\|(\cI-\cD_{q_2,\lambda})^{-1}\|
	\le \sum_{n=0}^\infty\frac{(3\sqrt{\|q\|_1/|\lambda|})^{2n}}{(2n)!}
	\le \exp\{3\sqrt{\|q\|_1/|\lambda|}\}.
	\end{equation}
	Taking into account~\eqref{eq.222}, \eqref{eq.223}, and \eqref{eq.224}, we conclude that
	\begin{multline*}
	\log \|(\cI-\cD_{q,\lambda})^{-1}\|
	\le \frac{\|q_1\|_1+\|q_3\|_1}{|\lambda|} 
	+ 3\sqrt{\|q\|_1/|\lambda|}
	\\=\frac1{|\lambda|}\int_{|t|\ge|\lambda|^{-1}}|q(t)|\, dt 
	+3\sqrt{\|q\|_1/|\lambda|}=o(|\lambda|^{-1}), \qquad \bC_+\ni\lambda \to 0.
	\end{multline*}
	The theorem is proved.
\end{proof}


\section{Some auxiliary results}\label{sec:aux}
In this section, we shall establish several results on Blaschke products and Cauchy integrals that will be used in the proof of the main theorem. 

Denote by $\Lambda$ the set of all sequences $\bol=(\lambda_n)_{n\in\bN} \in\ell_1(\bN)$  such that $\lambda_n \in\bC_+\cup\{0\}$ 
for every $n\in\bN$. Every $\bol\in \Lambda $ generates a Blaschke product
\begin{equation*}\label{eq.31}
B(z,\bol):=\prod_{n=1}^\infty \frac{z-\lambda_n}{z-\bar\lambda_n}, \qquad z\in\bC_+.
\end{equation*}
The above product converges absolutely in $\bC_+$ in view of the inequality 
\[
\Bigl|\frac{z-\lambda_n}{z-\bar\lambda_n} - 1 \Bigr| \le \frac{2|\lambda_n|}{|\myIm z|}
\]
and defines there a holomorphic function. 
Observe also that $|B(z,\bol)|\le1$ on $\bC_+$ due to the inequality $|z-\lambda_n| \le |z-\overline{\lambda_n}|$
holding for all $n\in\bN$ and all $z\in\bC_+$.

\begin{lemma}\label{le.31}
	Assume that $\bol=(\lambda_n)_{n\in\bN}\in \Lambda $ and  $B:=B(\cdot,\bol)$. Then
	\begin{equation}\label{eq.32}
	\lim_{\xi\to+0}\int_0^\pi \xi \log|B(\xi e^{it})|^{-1}\,dt=0.
	\end{equation}
\end{lemma}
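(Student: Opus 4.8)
The plan is to expand $\log|B(\cdot,\bol)|^{-1}$ as a sum of half-plane Green's functions and to reduce the whole statement to a single scale-invariant estimate. Since $|z-\lambda_n|\le|z-\bar\lambda_n|$ for $z\in\bC_+$ and $\lambda_n\in\overline{\bC}_+$, every factor of $B$ has modulus at most $1$, and the absolute convergence of the product (guaranteed by $\bol\in\ell_1$) allows me to write, for $z\in\bC_+$,
\[
\log|B(z,\bol)|^{-1}=\sum_{n=1}^\infty g(z,\lambda_n),\qquad g(z,\mu):=\log\Bigl|\frac{z-\bar\mu}{z-\mu}\Bigr|\ge 0 .
\]
Putting $z=\xi e^{it}$ and using that all terms are non-negative, Tonelli's theorem gives
\[
\int_0^\pi \xi\log|B(\xi e^{it},\bol)|^{-1}\,dt=\sum_{n=1}^\infty I_n(\xi),\qquad I_n(\xi):=\xi\int_0^\pi g(\xi e^{it},\lambda_n)\,dt\ge 0 ,
\]
so it suffices to show $\sum_n I_n(\xi)\to0$ as $\xi\to+0$, which I will obtain from a dominated-convergence argument on the series.

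The key observation is a scaling identity: as $g(\xi e^{it},\mu)=g(e^{it},\mu/\xi)$, setting $\Phi(\mu):=\int_0^\pi g(e^{it},\mu)\,dt$ yields $I_n(\xi)=\xi\,\Phi(\lambda_n/\xi)$. The entire proof then hinges on the single bound
\[
\Phi(\mu)\le C|\mu|,\qquad \mu\in\bC_+,
\]
for an absolute constant $C$. Granting it, $I_n(\xi)\le C|\lambda_n|$ uniformly in $\xi$, and the majorant $\sum_n C|\lambda_n|$ is finite since $\bol\in\ell_1(\bN)$. For each fixed $\lambda_n\ne0$ the point $\lambda_n/\xi$ escapes to infinity as $\xi\to+0$, and a complementary estimate $\Phi(\mu)\le 16/|\mu|$ for $|\mu|\ge2$ (established below) forces $I_n(\xi)\le 16\xi^2/|\lambda_n|\to0$; terms with $\lambda_n=0$ vanish identically. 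Dominated convergence for the series then gives $\sum_n I_n(\xi)\to0$, which is the assertion.

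It remains to prove $\Phi(\mu)\le C|\mu|$, and this is the step I expect to carry the real difficulty. Using the identity $|e^{it}-\bar\mu|^2=|e^{it}-\mu|^2+4\,\myIm\mu\,\sin t$ together with $\log(1+x)\le x$ gives the clean bound
\[
\Phi(\mu)\le 2\,\myIm\mu\int_0^\pi\frac{\sin t}{|e^{it}-\mu|^2}\,dt ,
\]
which settles $|\mu|\le\tfrac12$ (there $|e^{it}-\mu|\ge\tfrac12$, so $\Phi(\mu)\le16\,\myIm\mu\le16|\mu|$) and $|\mu|\ge2$ (there $|e^{it}-\mu|\ge|\mu|/2$, so $\Phi(\mu)\le16\,\myIm\mu/|\mu|^2\le16/|\mu|$). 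The genuinely delicate régime is $\tfrac12\le|\mu|\le2$, where $\mu$ may approach the unit circle: the integrand $1/|e^{it}-\mu|^2$ then develops a non-integrable singularity and the bound above fails, so the logarithm must be retained.

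For this remaining régime I bound $\Phi(\mu)=\int_0^\pi\bigl(\log|e^{it}-\bar\mu|-\log|e^{it}-\mu|\bigr)\,dt$ directly. The first integrand is at most $\log 3$, while $\int_0^\pi\log|e^{it}-\mu|^{-1}\,dt$ stays uniformly bounded for $|\mu|\le2$ because, writing $\mu=r e^{i\theta}$, the elementary inequality $|e^{it}-\mu|^2=(1-r)^2+2r\bigl(1-\cos(t-\theta)\bigr)\ge 2\sin^2\tfrac{t-\theta}{2}$ reduces the estimate to the uniform finiteness of $\int_0^\pi\bigl|\log|\sin\tfrac{t-\theta}{2}|\bigr|\,dt$. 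This yields $\Phi(\mu)\le C$ on $\tfrac12\le|\mu|\le2$, hence $\Phi(\mu)\le2C|\mu|$ there. Combining the three régimes proves the key estimate and completes the proof.
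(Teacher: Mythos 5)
Your proof is correct and follows essentially the same route as the paper: both decompose $\log|B(\xi e^{it})|^{-1}$ factorwise, reduce by the scaling $\xi e^{it}\mapsto e^{it}$ to estimates on a single function $\Phi(\mu)=\int_0^\pi\log\bigl|\tfrac{e^{it}-\bar\mu}{e^{it}-\mu}\bigr|\,dt$, and conclude by dominated convergence of the resulting series, with the dichotomy between $|\lambda_n|$ small and large relative to $\xi$ playing the same role in both arguments. The only substantive difference is that where the paper invokes ``standard arguments'' for the finiteness of $\sup_\mu\Phi(\mu)$ and uses the linear bound only for $|\mu|\le 1/2$, you actually prove the uniform bound $\Phi(\mu)\le C|\mu|$ (handling the delicate annulus $\tfrac12\le|\mu|\le 2$ via the integrability of the logarithmic singularity) together with the decay $\Phi(\mu)\le 16/|\mu|$ at infinity, which makes your write-up more self-contained.
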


\begin{proof} Standard arguments show that the quantity
	\[       
	\beta:=\sup_{\lambda\in\bC_+}\int_0^\pi \log{\left|\frac{e^{it}-\bar\lambda}{e^{it}-\lambda}\right|}\, dt
	\]
	is positive and finite. Therefore, for every $\xi>0$ and $j\in\bN$ we have 
	\begin{equation}\label{eq.33}
	\int_0^\pi  \log{\left|\frac{\xi e^{it}-\bar\lambda_j}{\xi e^{it}-\lambda_j}\right|}\, dt\le \beta.
	\end{equation}
	When $|\lambda_j| \le \xi/2$, we get the estimate 
	\[
	\left|\frac{\xi e^{it}-\bar\lambda_j}{\xi e^{it}-\lambda_j}\right|
	\le 1 +\left|\frac{\lambda_j-\bar\lambda_j}{\xi e^{it}-\lambda_j}\right|
	\le 1+\frac{4|\lambda_j|}{\xi},
	\]
	so that
	\begin{equation}\label{eq.34}
	\int_0^\pi \xi \log{\left|\frac{\xi e^{it}-\bar\lambda_j}{\xi e^{it}-\lambda_j}\right|}\, dt
	\le \int_0^\pi 4|\lambda_j|\, dt
	= 4 \pi|\lambda_j|.
	\end{equation}
	Combining~\eqref{eq.33} and \eqref{eq.34}, we conclude that 
	\begin{multline*}
	\int_0^\pi \xi \log|B(\xi e^{it})|^{-1}\,dt
	= \sum_{j=1}^\infty \int_0^\pi \xi \log{\left|\frac{\xi e^{it}-\bar\lambda_j}{\xi e^{it}-\lambda_j}\right|}\,dt\\
	\le \xi\beta \sum_{j \,:\,2|\lambda_j|>\xi} 1  
	+ 4\pi \sum_{j\,:\,2|\lambda_j|\le\xi} |\lambda_j|
	= (\beta + 2\pi) \sum_{j=1}^\infty \min\{\xi,\, 2|\lambda_j|\}=o(1)
	\end{multline*}
	as $\xi\to +0$ by the Lebesgue dominated convergence theorem.  
\end{proof}

\begin{lemma}\label{le.32}
	Assume that $\bol,\bom \in \Lambda$; then the following inequality holds:
	\begin{equation}\label{eq.35}
	|B(z,\bol)-B(z,\bom)|
	\le \frac{2\|\bol-\bom\|_1}{|\myIm z|}, \qquad
	z\in \bC_+.
	\end{equation}
\end{lemma}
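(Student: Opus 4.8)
The plan is to reduce the estimate to a single Blaschke factor and then sum via a telescoping identity. Write $b_\nu(z) := (z-\nu)/(z-\bar\nu)$ for the elementary factor, so that $B(z,\bol) = \prod_n b_{\lambda_n}(z)$ and $B(z,\bom) = \prod_n b_{\mu_n}(z)$. Since each factor obeys $|b_\nu(z)| \le 1$ on $\bC_+$ (as already noted just before the lemma), the finite partial products $B_N(z,\bol) := \prod_{n=1}^N b_{\lambda_n}(z)$ satisfy the telescoping bound
\[
|B_N(z,\bol) - B_N(z,\bom)| \le \sum_{k=1}^N |b_{\lambda_k}(z) - b_{\mu_k}(z)|,
\]
obtained by expanding $\prod_{n\le N} b_{\lambda_n} - \prod_{n\le N} b_{\mu_n}$ as a telescoping sum in which every remaining product of Blaschke factors has modulus at most $1$.

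The heart of the matter is therefore the single-factor estimate
\[
|b_\lambda(z) - b_\mu(z)| \le \frac{2|\lambda - \mu|}{\myIm z}, \qquad z \in \bC_+, \quad \lambda, \mu \in \bC_+ \cup \{0\}.
\]
To obtain the sharp constant $2$ I would integrate along the segment $w(s) := \mu + s(\lambda - \mu)$, $s \in [0,1]$, which stays in the closed upper half-plane because $\myIm w(s)$ is a convex combination of the nonnegative numbers $\myIm\lambda$ and $\myIm\mu$. Treating $w$ and $\bar w$ as independent variables,
\[
\frac{d}{ds}\, b_{w(s)}(z) = -\frac{\lambda - \mu}{z - \overline{w(s)}} + \frac{(z - w(s))\,\overline{\lambda - \mu}}{\bigl(z - \overline{w(s)}\bigr)^2},
\]
so that $|b_\lambda(z) - b_\mu(z)| \le \int_0^1 \bigl|\tfrac{d}{ds} b_{w(s)}(z)\bigr|\, ds$. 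Here two elementary geometric facts close the argument: first, $|z - \bar w| \ge \myIm z$ whenever $\myIm w \ge 0$; and second, $|z - w| \le |z - \bar w|$ for such $w$, since $(\myIm z + \myIm w)^2 \ge (\myIm z - \myIm w)^2$. These bound each of the two terms in the integrand by $|\lambda - \mu|/\myIm z$, whence the displayed single-factor estimate follows.

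Combining the two, the finite telescoping sum is bounded by $\tfrac{2}{\myIm z}\sum_{k=1}^N |\lambda_k - \mu_k| \le \tfrac{2\|\bol - \bom\|_1}{\myIm z}$, uniformly in $N$. Letting $N \to \infty$ and using the absolute convergence of the Blaschke products (recorded before the lemma) to pass $B_N \to B$ pointwise yields~\eqref{eq.35}. The only genuine obstacle is extracting the exact constant $2$: a naive algebraic splitting of $b_\lambda - b_\mu$ into two differences produces a strictly larger constant, whereas the segment-integral representation together with the two inequalities $|z - \bar w| \ge \myIm z$ and $|z - w| \le |z - \bar w|$ delivers the sharp value.
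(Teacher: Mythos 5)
Your proof is correct, and its skeleton is the same as the paper's: telescope the difference of the two products using $|b_\nu(z)|\le 1$, bound each elementary difference by $2|\lambda_k-\mu_k|/|\myIm z|$, and sum. (The paper telescopes via the intermediate sequences $\bol^{(n)}$ that replace $\lambda_j$ by $\mu_j$ one index at a time, which is the same device as your finite partial products.) The only real divergence is in how the single-factor estimate is obtained: you integrate $\tfrac{d}{ds}b_{w(s)}(z)$ along the segment from $\mu$ to $\lambda$, while the paper uses the exact algebraic identity
\begin{equation*}
\frac{z-\zeta_1}{z-\bar \zeta_1}- \frac{z-\zeta_2}{z-\bar \zeta_2}
= \frac{\zeta_2-\zeta_1}{z-\bar \zeta_1}
+ \frac{\bar \zeta_1-\bar\zeta_2}{z-\bar \zeta_1}\,\frac{z-\zeta_2}{z-\bar \zeta_2},
\end{equation*}
and then applies exactly your two geometric facts ($|z-\bar\zeta|\ge|\myIm z|$ and $|z-\zeta|\le|z-\bar\zeta|$) to each of the two terms. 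So your closing remark that ``a naive algebraic splitting produces a strictly larger constant'' is not accurate: the right algebraic splitting---add and subtract $(z-\zeta_2)/(z-\bar\zeta_1)$, i.e.\ change numerator and denominator one at a time---delivers the constant $2$ directly, and is arguably shorter since it avoids checking differentiability along the segment and the degenerate case $\myIm w(s)=0$. Your segment-integral argument is nonetheless complete and correct; it buys nothing extra here, but it is the version that generalizes more readily when one wants Lipschitz bounds for other parameter-dependent families.
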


\begin{proof} 
	We introduce sequences~$\bol^{(n)}=(\lambda^{(n)}_j)_{j\in\bN}$, $n\in\bN$, via
	\[
	\lambda^{(n)}_j:= \begin{cases}
	\mu_j, & \text{if } j<n;\\
	\lambda_j, & \text{if } j\ge n;
	\end{cases}
	\]
	then
	\begin{equation}\label{eq.37}
	|B(z,\bol)-B(z,\bom)|\le \sum\limits_{n=1}^\infty |B(z,\bol^{(n)})-B(z,\bol^{(n+1)})|, \qquad z\in\bC_+.
	\end{equation}
	The identity 
	\[
	\frac{z-\zeta_1}{z-\bar \zeta_1}- \frac{z-\zeta_2}{z-\bar \zeta_2}
	= \frac{\zeta_2-\zeta_1}{z-\bar \zeta_1} 
	+ \frac{\bar \zeta_2-\bar\zeta_1}{z-\bar \zeta_1}\frac{z-\zeta_2}{z-\bar \zeta_2}
	\]
	implies the following inequality for arbitrary $z,\zeta_1,\zeta_2\in\bC_+$: 
	\[
	\left|\frac{z-\zeta_1}{z-\bar \zeta_1}- \frac{z-\zeta_2}{z-\bar \zeta_2}\right|
	\le \frac{2|\zeta_1-\zeta_2|}{|\myIm z|}.
	\]
	Therefore, for every $n\in\bN$ and every $z\in\bC_+$,
	\[
	|B(z,\bol^{(n)})-B(z,\bol^{(n+1)})| \le \frac{2|\lambda_n-\mu_n|}{|\myIm z|},
	\]  
	which on account of~\eqref{eq.37} results in~\eqref{eq.35}.
\end{proof}

\begin{lemma}\label{le.51b} Assume that $(f_n)_{n\in\bN}$ is a sequence of non-negative functions in $L_1(\bR)$ converging to $f\in L_1(\bR)$ almost everywhere. Assume also that there exists a finite limit $\lim\limits_{n\to\infty} \|f_n\|_{L_1}$ and that, for every $\varepsilon>0$, the sequence $(f_n)_{n\in\bN}$ converges to~$f$ in the space~$L_1(\bR_{\varepsilon})$. Then 
	\begin{equation*}
	\lim_{n\to\infty} \int_{-\infty}^\infty \frac{f_n(x)}{z-x} \, dx 
	= \frac{\gamma}{z} +  \int_{-\infty}^\infty \frac{f(x)}{z-x} \, dx, 
	\qquad z \in\bC_+,
	\end{equation*}
	where $\gamma:=\lim\limits_{n\to\infty} \|f_n\|_{L_1} \, - \,\|f\|_{L_1}$ is non-negative.
\end{lemma}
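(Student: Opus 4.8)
The plan is to fix $z\in\bC_+$ and to show that the difference of Cauchy integrals
\[
I_n(z):=\int_{-\infty}^\infty \frac{f_n(x)-f(x)}{z-x}\,dx
\]
converges to $\gamma/z$ as $n\to\infty$, since adding back $\int_{-\infty}^\infty f(x)/(z-x)\,dx$ then yields the asserted formula. First I would record the easy facts. Non-negativity of $\gamma$ follows from Fatou's lemma: as the $f_n$ are non-negative and converge to $f$ almost everywhere, $\|f\|_{L_1}\le\liminf_n\|f_n\|_{L_1}=\lim_n\|f_n\|_{L_1}$, so $\gamma\ge0$. Moreover, since $\lim_n\|f_n\|_{L_1}$ exists and is finite, the constant $M:=\sup_n\|f_n\|_{L_1}+\|f\|_{L_1}$ is finite.

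The key device is the elementary identity
\[
\frac1{z-x}=\frac1z+\frac{x}{z(z-x)},
\]
which isolates the value of the Cauchy kernel at the origin from a remainder that is small near $x=0$. Fixing $\varepsilon>0$, splitting $I_n(z)$ into the contributions from $\bR_\varepsilon$ and from $\{|x|<\varepsilon\}$, and applying the identity on the latter set, I would write
\[
I_n(z)=\int_{\bR_\varepsilon}\frac{f_n(x)-f(x)}{z-x}\,dx+\frac1z\int_{|x|<\varepsilon}\bigl(f_n(x)-f(x)\bigr)\,dx+\int_{|x|<\varepsilon}\frac{x\,\bigl(f_n(x)-f(x)\bigr)}{z(z-x)}\,dx.
\]

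Next I would analyse the three terms in turn. For the first, the kernel is bounded by $1/\myIm z$ on $\bR_\varepsilon$, and $f_n\to f$ in $L_1(\bR_\varepsilon)$ by hypothesis, so this term tends to $0$ as $n\to\infty$. For the second, writing $\int_{|x|<\varepsilon}f_n=\|f_n\|_{L_1}-\int_{\bR_\varepsilon}f_n$ and using once more the $L_1(\bR_\varepsilon)$-convergence together with $\|f_n\|_{L_1}\to\gamma+\|f\|_{L_1}$, I obtain $\int_{|x|<\varepsilon}(f_n-f)\to\gamma$ as $n\to\infty$; crucially, this limit equals $\gamma$ irrespective of $\varepsilon$. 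For the third, I would bound $\bigl|x/(z(z-x))\bigr|\le\varepsilon/(|z|\,\myIm z)$ on $\{|x|<\varepsilon\}$ and use $\int_{|x|<\varepsilon}|f_n-f|\le M$ to get the uniform-in-$n$ estimate $\le\varepsilon M/(|z|\,\myIm z)$.

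Combining these, I would take $\limsup_{n\to\infty}$ of $|I_n(z)-\gamma/z|$: the first two terms contribute nothing, leaving $\limsup_n|I_n(z)-\gamma/z|\le\varepsilon M/(|z|\,\myIm z)$, and letting $\varepsilon\to+0$ forces the left-hand side to vanish, i.e.\ $I_n(z)\to\gamma/z$. The only step requiring genuine care — and the actual content of the lemma — is the second term: the hypotheses permit a fixed amount $\gamma$ of $L_1$-mass to concentrate at the origin in the limit, and the displayed identity is exactly what converts that concentrating mass into the pole term $\gamma/z$. Everything else is a routine bounded-kernel estimate.
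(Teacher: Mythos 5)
Your proof is correct and follows essentially the same route as the paper's: both isolate the mass concentrating at the origin by splitting the Cauchy integral into the parts near and away from $0$ and exploiting that $1/(z-x)-1/z=x/(z(z-x))$ is $O(|x|)$ near the origin, so that the concentrating mass produces exactly the pole term $\gamma/z$. The only difference is bookkeeping: the paper diagonalizes with a shrinking cutoff $\varepsilon_n\to0$ chosen so that $\int_{|x|\ge\varepsilon_n}|f_n-f|\to0$, whereas you keep $\varepsilon$ fixed, take $\limsup_{n\to\infty}$, and then let $\varepsilon\to+0$ --- both are equally valid.
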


\begin{proof}
	The fact that the number~$\gamma$ is non-negative follows from Fatou's lemma. Next, for every $k\in\bN$ there exists an $N_k \in\bN$ with the property that 
	\[
	\int_{|x|\ge 1/k} |f_n(x)-f(x)| \, dx \le \tfrac1k
	\]
	as soon as $n\ge N_k$. Assuming that $N_k$ strictly increase and setting $\varepsilon_n:= \tfrac1k$ for $n \in [N_k, N_{k+1})$, we see that $\varepsilon_n\to 0$ as $n\to\infty$ and that
	\[
	\lim_{n\to\infty}\int_{|x|\ge \varepsilon_n} |f_n(x)-f(x)| \, dx =0.
	\]
	Set $\Delta_n:=(-\varepsilon_n,\varepsilon_n)$ and $\Delta'_n:=\bR\setminus\Delta_n$.
	Then for every $z\in\bC_+$ we get
	\[
	\int_{\Delta_n} \frac{f_n(x)}{z-x} \, dx
	=\frac1z \int_{\Delta_n} f_n(x) \, dx  +o(1)
	=\frac{\gamma}{z}+o(1), \qquad n\to \infty,
	\]
	and 
	\[
	\lim\limits_{n\to\infty}\int_{\Delta'_n} \frac{f_n(x)}{z-x} \, dx
	=\int_{-\infty}^\infty \frac{f(x)}{z-x} \, dx
	\]
	yielding the statement of the lemma.
\end{proof}

\section{Proof of the main result}\label{sec:proof}

We recall first some standard facts about the relations between the coefficient~$a$ and the negative spectrum of the operator~$T_q$ (see e.g.,~\cite[Ch~3.5]{Mar}). For a potential~$q\in \cQ_1$ and any nonzero $k\in \bR$, the Jost solutions $e_-(\cdot,k;q)$ and $e_-(\cdot,-k;q)$ form a fundamental system of solutions of the equation $-f''+qf=k^2f$. Comparing the asymptotics of both parts of~\eqref{eq.211} at~$-\infty$ shows that
\[
e_+(x,k;q) = a(k;q) e_-(x,-k;q) + b(k;q) e_-(x,k;q)
\]
and, therefore, the Wronskian 
\[
W(e_+(x,k;q),e_-(x,k;q)):= e'_+(x,k;q)e_-(x,k;q) - e_+(x,k;q)e'_-(x,k;q)
\] 
of $e_+(\,\cdot\,,k;q)$ and $e_-(\,\cdot\,,k;q)$ is equal to
\[
a(k;q) W(e_-(x,-k;q),e_-(x,k;q)).
\]
As the Wronskian does not depend on~$x$, letting $x$ to $-\infty$ and using the asymptotics of the Jost solution $e_-$ and its derivative (see Corollary~\ref{cor:Jost.e}) we find that
\[
W(e_+(x,k;q),e_-(x,k;q)) = 2ik a(k;q).
\]

Observe now that both parts of the above equality admit continuation to functions that are analytic and bounded in $\bC_+$ and continuous in~$\overline{\bC_+}$; therefore, the above equality continues to hold in~$\overline{\bC_+}$. It follows that if $z$ is a zero of~$a(\cdot;q)$ in $\bC_+$, then the Jost solutions~$e_+(\cdot,z;q)$ and $e_-(\cdot,z;q)$ are linearly dependent and thus they exponentially decay as $|x|\to \infty$. As a result, $z^2$ is an eigenvalue of the Schr\"odinger operator~$T_q$ with eigenfunction~$e_\pm(\cdot;z;q)$ and thus $z^2$ is one of $-\kappa^2(j;q)$. 

Conversely, to each eigenvalue $-\kappa^2(j;q)<0$ of $T_q$ there corresponds an eigenfunction, i.e., a solution of~\eqref{eq:Jost.eq} with $\lambda^2 = -\kappa^2(j;q)$ that is square integrable on the whole real line. As the Schr\"odinger operator~$T_q$ with integrable potential~$q$ is in the limit point case at $\pm\infty$, the only (up to a constant factor) solution of the equation $-y'' + qy = -\kappa_j^2y$ that is square integrable at $\pm\infty$ is the Jost solution~$e_\pm (\cdot,i\kappa(j;q);q)$. Therefore, the right Jost solution $e_+ (\cdot,i\kappa_j;q)$ and the left Jost solution~$e_- (\cdot,i\kappa(j;q);q)$ must be collinear and thus $a(i\kappa(j;q);q)=0$. 

The above reasoning identifies the numbers~$i\kappa(j;q)$ for which $\kappa(j;q)>0$ as zeros of the coefficient~$a(\cdot;q)$. 
Assume now that $q_0\in \cQ_1$ and that $j\in\bN$ is such that $\kappa(j;q_0)>0$. Then continuity of the 
mapping~$L_1(\bR)\ni q \mapsto a(\cdot;q)\in \cA$ (cf.~Lemma~\ref{lem:Jost.a}) along with Rouche's theorem imply that the mapping
$\cQ_1\ni q \mapsto \kappa(j;q)\in \cA$ 
is continuous in some small neighbourhood of the point~$q_0$. We next prove that the whole sequence~$\kappa(\cdot;q)$ depends continuously on~$q$ in suitable topology. 

\begin{lemma}\label{lem:4.2} The mapping
	$\cQ_1\ni q\mapsto \kappa(\cdot;q)\in\ell_1(\bN)$ 
	is continuous. 
\end{lemma}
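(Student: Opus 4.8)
The plan is to fix $q_0\in\cQ_1$, take an arbitrary sequence $q_n\to q_0$ in $\cQ_1$, and show that $\kappa(\cdot;q_n)\to\kappa(\cdot;q_0)$ in $\ell_1(\bN)$. By Lemma~\ref{lem:Jost.a} the coefficients converge, $a(\cdot;q_n)\to a(\cdot;q_0)$ in $\cA$, i.e.\ uniformly on compact subsets of $\Omega_0$ (and hence of $\bC_+$); recall also that the numbers $i\kappa(j;q)$ are precisely the zeros of $a(\cdot;q)$ in $\bC_+$, that by the Lieb--Thirring bound~\eqref{eq:intr.LT} they all lie on the segment $i(0,\tfrac12\|q\|_1]$ of the imaginary axis, and that they are simple. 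The proof then splits into three steps: (i) coordinatewise convergence $\kappa(j;q_n)\to\kappa(j;q_0)$ for every $j\in\bN$; (ii) convergence of the total sums $\sum_j\kappa(j;q_n)\to\sum_j\kappa(j;q_0)$; and (iii) the passage from (i)--(ii) to convergence in $\ell_1(\bN)$ by means of Scheff\'e's lemma for series of non-negative terms.

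Step (i) is the soft part. Since every compact $K\subset\Omega_0$ is bounded away from the origin, Hurwitz's theorem applies on compacts of $\bC_+$: around each simple zero $i\kappa(j;q_0)$ with $\kappa(j;q_0)>0$ exactly one zero of $a(\cdot;q_n)$ appears for large $n$ and converges to it, so $\kappa(j;q_n)\to\kappa(j;q_0)$ (this is the Rouch\'e argument recorded just above the lemma, applied uniformly on a finite initial block of indices). For the padding indices, where $\kappa(j;q_0)=0$, simplicity together with Hurwitz forbids any subsequential limit of $\kappa(j;q_n)$ from being a positive non-eigenvalue of $q_0$ or from clustering at an existing eigenvalue, whence necessarily $\kappa(j;q_n)\to0$. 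In particular, Fatou's lemma gives the lower bound $\liminf_n\sum_j\kappa(j;q_n)\ge\sum_j\kappa(j;q_0)$, which is half of Step~(ii).

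The main obstacle is the complementary upper semicontinuity $\limsup_n\sum_j\kappa(j;q_n)\le\sum_j\kappa(j;q_0)$: the Lieb--Thirring inequality only yields $\limsup_n\sum_j\kappa(j;q_n)\le\tfrac12\|q_0\|_1$, which is in general far too large, and the convergence $a(\cdot;q_n)\to a(\cdot;q_0)$ is \emph{not} uniform near the origin, so a priori an arbitrary amount of eigenvalue mass of $q_n$ could accumulate at the threshold $\kappa=0$ without being seen on compacts of $\bC_+$. To rule this out I would exploit the variational monotonicity of the eigenvalue sum: since $q\le\wt q$ pointwise implies $T_q\le T_{\wt q}$ in the form sense, the negative eigenvalues of $T_q$ lie below those of $T_{\wt q}$, so the functional $F(q):=\sum_j\kappa(j;q)$ is non-increasing in $q$. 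Sandwiching $q_0-|q_n-q_0|\le q_n\le q_0+|q_n-q_0|$ then reduces the claim to the one-sided continuity of $F$ under small attractive and repulsive $L_1$-perturbations, i.e.\ to $F(q_0-u)-F(q_0)\to0$ as $u\ge0$, $\|u\|_1\to0$. This is exactly the content of a perturbative strengthening of~\eqref{eq:intr.LT}, the Lipschitz bound $|F(q)-F(\wt q)|\le\tfrac12\|q-\wt q\|_1$, which contains the quoted Lieb--Thirring inequality as the special case $\wt q=0$; establishing or invoking this bound is the crux of the lemma.

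Granting Step~(ii), the conclusion is immediate: the sequences $(\kappa(j;q_n))_j$ are non-negative, converge coordinatewise to $(\kappa(j;q_0))_j$, and have convergent total sums, so Scheff\'e's lemma gives $\sum_j|\kappa(j;q_n)-\kappa(j;q_0)|\to0$, that is, $\kappa(\cdot;q_n)\to\kappa(\cdot;q_0)$ in $\ell_1(\bN)$. I expect Step~(ii), and within it the upper semicontinuity of the eigenvalue sum at the threshold, to be the only place requiring real work beyond analyticity and Rouch\'e; a more self-contained alternative would be to bound $\sum_{\kappa(j;q_n)<\delta}\kappa(j;q_n)$ directly through an argument-principle integral of $z\,a'(z;q_n)/a(z;q_n)$ over a small upper semicircle about the origin, provided the threshold estimate of Theorem~\ref{thm:Jost.a} can be made uniform along $q_n\to q_0$.
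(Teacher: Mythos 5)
Your overall architecture (coordinatewise convergence via Rouch\'e, convergence of the sums, then Scheff\'e) is sound, and you correctly locate the difficulty in the upper semicontinuity $\limsup_n\sum_j\kappa(j;q_n)\le\sum_j\kappa(j;q_0)$. But that step is not closed: you reduce it to the Lipschitz bound $|F(q)-F(\wt q)|\le\tfrac12\|q-\wt q\|_1$ for $F(q)=\sum_j\kappa(j;q)$ and then say that ``establishing or invoking this bound is the crux of the lemma.'' That bound is \emph{not} the Lieb--Thirring inequality \eqref{eq:intr.LT}; it is a strictly stronger perturbative statement (a subadditivity of the $1/2$-moment under splitting the potential), which is neither proved in your argument nor available from anything quoted in the paper. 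As written, the central quantitative estimate of the proof is an unproved assertion, so the argument has a genuine gap. The fallback you mention (an argument-principle integral of $z\,a'/a$ over a small semicircle, conditional on a uniform-in-$n$ version of Theorem~\ref{thm:Jost.a}) is likewise only a conditional sketch.

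The paper avoids this estimate entirely, and the device is worth internalizing: after passing to $\|q-q_n\|_{L_1}\le 2^{-n}$, it forms the monotone envelopes $q_n^{\mp}:=q\mp\sum_{k\ge n}|q-q_k|$, so that $q_n^-\le q_n\le q_n^+$ and, crucially, $q_n^{\pm}$ are \emph{monotone in $n$}. By the minimax principle the sequences $\kappa(j;q_n^{\pm})$ are then monotone in $n$ for each fixed $j$, dominated by the single $\ell_1$ sequence $\kappa(\cdot;q_1^-)$ (ordinary Lieb--Thirring suffices here), and converge coordinatewise to $\kappa(j;q)$ by the Rouch\'e argument. Dominated convergence over $j$ gives $\|\kappa(\cdot;q)-\kappa(\cdot;q_n^{\pm})\|_1\to0$, and the sandwich $\kappa(j;q_n^+)\le\kappa(j;q_n)\le\kappa(j;q_n^-)$ transfers this to $q_n$. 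In other words, your sandwiching idea is exactly right, but the perturbations must be summed into a monotone family so that domination replaces the missing Lipschitz bound; with that modification your Scheff\'e step also becomes unnecessary, since the $\ell_1$ convergence is obtained directly.
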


\begin{proof} Consider an arbitrary sequence $(q_n)_{n\in\bN}$ of potentials from $\cQ_1$ that converges to some $q\in\cQ_1$ in the $L_1$-norm; we then prove that  $\kappa(\cdot;q_n)\to\kappa(\cdot;q)$ as $n\to\infty$ in the $\ell_1$-norm. 
	Without loss of generality, we can assume that 
	\begin{equation} \label{eq:4.2}
	\|q-q_n\|_{L_1}\le 2^{-n}, \qquad n\in\bN,
	\end{equation}
	and introduce auxiliary sequences
	\[     
	q^-_n:= q - \sum\limits_{k=n}^\infty|q-q_k|,  \qquad
	q^+_n:= q + \sum\limits_{k=n}^\infty|q-q_k|,  \qquad   n\in\bN.
	\]
	In virtue of~\eqref{eq:4.2}, they both converge to~$q$ in the topology of~$\cQ_1$ and, moreover,
	\[
	q^-_n\le  q^-_{n+1}\le  q\le q^+_{n+1}\le q^+_n,  \qquad  q^-_n\le  q_n\le  q^+_n, \qquad n\in\bN.
	\]
	By the minimax principle~\cite[Ch.~XIII.1]{ReeSim}, for every $j\in\bN$ and $n\in\bN$ we get
	\[
	\kappa(j,q^+_n) \le \kappa(j,q^+_{n+1}) 
	\le \kappa(j,q)
	\le \kappa(j,q^-_{n+1})\le \kappa(j,q^-_n)
	\]
	and 
	\begin{equation}\label{eq:4.kappa-minimax}
	\kappa(j,q^+_n)\le \kappa(j,q_n) \le \kappa(j,q^-_{n});
	\end{equation}
	moreover, the continuity of each separate~$\kappa(j;q)$ implies
	\[  
	\lim_{n\to\infty}|\kappa(j,q)- \kappa(j,q^\pm_n)|=0.
	\]
	The Lebesgue dominated convergence theorem now yields
	\[
	\lim_{n\to\infty} \|\kappa(\cdot,q)- \kappa(\cdot,q^\pm_n)\|_1=0
	\]
	which, in view of the estimates (cf.~\eqref{eq:4.kappa-minimax})
	\begin{align*}
	|\kappa(j,q)- \kappa(j,q_n)| &\le \max\{|\kappa(j,q)- \kappa(j,q^+_n)|,|\kappa(j,q)- \kappa(j,q^-_n)|\} \\
	&\le |\kappa(j,q)- \kappa(j,q^+_n)| +|\kappa(j,q)- \kappa(j,q^-_n)|
	\end{align*}
	finishes the proof.
\end{proof}

For $q\in\cQ_1$, we denote by 
\[
B(z;q) := \prod_{j=1}^{\infty} \frac{z-i\kappa(j;q)}{z+i\kappa(j;q)}
\]  
the Blaschke product corresponding to the sequence $(i\kappa(j;q))$ (where, as usual, $\kappa(j;q)=0$ for $j$ larger than the number of negative eigenvalues of the operator~$T_q$). 
It is known~\cite[Ch.~3.5]{Mar} that if $q\in\cQ_{1,1}$ is a potential from the Faddeev--Marchenko class, then the values in~$\bC_+$ of the scattering coefficient~$a(\cdot;q)$ can be uniquely determined from its values on the real line via the Poisson--Schwarz formula
\begin{equation} \label{eq:proof.PS}
a(z;q) 
= B(z;q) \exp\Bigl\{\frac{1}{\pi i} \int_{\bR} \frac{\log |a(k;q)|}{k-z}\,dk \Bigr\}.	
\end{equation}
It also follows from the results of Gesztesy and Holden~\cite{GH} that for all $q\in\cQ_{1,1}$ the first trace formula~\eqref{eq:intr.FZ} holds true. Since (see Remark~\ref{re.25})
\begin{equation}\label{eq.45}
(1-|r(k;q)|^2)^{-1}=|a(k;q)|^2, \qquad   k\in\bR\setminus\{0\}, \quad q\in\cQ_1,
\end{equation}
that trace formula can be written in the form 
\begin{equation}\label{eq.46}
-4\sum_{j=1}^{\infty}\kappa(j;q) + \frac{2}{\pi}\int_{\bR}\log |a(k;q)|\,dk 
= \int_{\bR} q(x)\,dx, \qquad q\in\cQ_{1,1}.
\end{equation}
Formula~\eqref{eq.46} is our starting point in proving the main result of the paper.

\begin{proof}[Proof of Theorem~\ref{thm:main}.]
	Continuity of the first mapping in~\eqref{eq:intr.cont} is established in Lemma~\ref{lem:4.2}. 
	We next prove that the trace formula \eqref{eq.46} holds for all potentials in~$\cQ_1$. The idea is to extend that formula by continuity starting from potentials in the  Faddeev--Marchenko class~$\cQ_{1,1}$, which are dense in~$\cQ_1$ in the $L_1$-norm.
	
	To this end, we fix any $q\in\cQ_1$ and choose a sequence of potentials $q_n\in\cQ_{1,1}$, $n\ge1$, converging to $q$ in the topology of $L_1(\bR)$. Next we rewrite formula~\eqref{eq.46} for $q_n$ in the form
	\begin{equation*}\label{eq.47}
	\int_{\bR}\log |a(k;q_n)|\,dk
	= \frac{\pi}{2}\int_{\bR} q_n(x)\,dx + 2\pi\sum_{j=1}^{\infty} \kappa(j;q_n),
	\end{equation*}
	and observe that, by Lemma~\ref{lem:4.2}, the right-hand side above possesses a finite limit as~$n\to\infty$ and thus
	\[
	\lim_{n\to\infty}\int_{\bR}\log |a(k;q_n)|\,dk 
	= \frac{\pi}{2}\int\limits_{\bR} q(x)\,dx 
	+ 2\pi\sum\limits_{j=1}^{\infty}\kappa(j;q).
	\]
	Since the functions $\log |a(\,\cdot\,;q_n)|$ are non-negative on $\mathbb{R}\setminus\{0\}$ and converge pointwise therein to the non-negative function $\log |a(\,\cdot\,;q)|$ as $n\to\infty$, Fatou's lemma yields the inequality 
	\begin{equation}\label{eq.48}
	\gamma
	:= \lim_{n\to\infty}\int_{\bR}\log |a(k;q_n)|\,dk - \int_{\bR}\log |a(k;q)|\,dk \ge0.
	\end{equation}
	In particular, the function $\log|a(\,\cdot\,;q)|$ is integrable on the line, and it remains to show that $\gamma=0$.

	By Lemma~\ref{lem:4.2}, $\kappa(\cdot;q_n)\to \kappa(\cdot;q)$ in~$\ell_1$-norm as $n\to \infty$, and then Lemma~\ref{le.32} guarantees convergence of the corresponding Blaschke products, i.e., that $B(z;q_n)\to B(z;q)$ for every $z\in\bC_+$. Next, Lemma~\ref{le.51b} implies that, for every $z\in \bC_+$, 
	\begin{equation*} 
	\lim_{n\to\infty} \exp\Bigl\{\frac{1}{\pi i} 
	\int_{\bR} \frac{\log{|a(k; q_n)|}}{k-z}\,dk \Bigr\}
	= e^{i\gamma/\pi z}\exp\Bigl\{ \frac{1}{\pi i} 
	\int_{\bR} \frac{\log{|a(k; q)|}}{k-z}\,dk \Bigr\}
	\end{equation*}
	with $\gamma$ defined in~\eqref{eq.48}.    
	Finally, for every $z\in\bC_+$ we have $a(z;q_n) \to a(z;q) $
	as $n\to\infty$ by virtue of Lemma~\ref{lem:Jost.a}. Combining these facts and letting $n\to\infty$ in equation~\eqref{eq:proof.PS} for $q=q_n$, we arrive at the equality
	\begin{equation} \label{eq:412}
	a(z;q) = e^{i\gamma/\pi z} B(z;q) \exp\Bigl\{ \frac{1}{\pi i} 
	\int_{\bR} \frac{\log|a(k,q)|}{k-z}\,dk \Bigr\}, 
	\qquad z\in\bC_+.
	\end{equation}
	
	Since $\log |a(k;q)|\ge 0$ for $k\in \bR\setminus\{0\}$, we conclude that
	\[
	\Bigl|\exp\Bigl\{ \frac{1}{\pi i} \int_{\bR} \frac{\log|a(k;q)|}{k-z}\,dk \Bigr\}\Bigr|\ge 1, 
	\qquad z\in \bC_+,
	\]
	so that \eqref{eq:412} results in the relation
	\[
	\log^+|a(z;q)|\ge \log|e^{i\gamma/\pi z}| +\log |B(z;q)|, \qquad z\in \bC_+.
	\]
	Setting  $z=\xi e^{it}$ with $\xi>0$ and $t\in (0, \pi)$, we arrive at the inequality
	\begin{equation*}
	\frac{\gamma}{\pi} \sin t
	\le \xi\log^+|a(\xi e^{it};q)| +\xi\log|B(\xi e^{it};q)|^{-1},
	\end{equation*}
	which upon integration in $t$ produces
	\begin{equation*} 
	\frac{2\gamma}{\pi} \le \int_0^\pi \xi\log^+|a(\xi e^{it};q)|\, dt 
	+ \int_0^\pi \xi\log|B(\xi e^{it};q)|^{-1}\, dt.
	\end{equation*}
	Letting $\xi\to+0$ in the above relation, on account of~\eqref{eq:jost.a-asympt} and \eqref{eq.32} one concludes that $\gamma\le 0$, so that $\gamma=0$ and thus the trace formula~\eqref{eq:intr.FZ} holds for all~$q\in\cQ_1$.
	
	It remains to prove continuity of the mapping $\cQ_1\ni q\mapsto h(\cdot;q)\in  \cQ_1$. Assume that a sequence $q_n \in \cQ_1$ converges to a $q\in\cQ_1$ in the $L_1$-norm; we have to prove that $h(\cdot;q_n)\to h(\cdot;q)$ in $L_1(\bR)$ as $n\to\infty$. Recall that, in view of~\eqref{eq.45}, we have 
	\[
	h(k;q)=2\log |a(k;q)|, \quad f(k;q_n) = 2\log |a(k;q_n)|, \qquad k\in\bR_0, \quad n\in\bN.
	\]
	Passing to the limit in the trace formula~\eqref{eq.46} for $q_n$, we conclude that
	\[
	\lim_{n\to\infty} \int_{\bR} h(k;q_n)\,dk = \int_{\bR} h(k;q)\,dk.
	\]
	As $h(\,\cdot\,;q_n)$ and $h(\,\cdot\,;q)$ are non-negative, this means that $\|h(\,\cdot\,;q_n)\|_1 \to \|h(\,\cdot\,;q)\|_1$ as $n\to\infty$. Since also $h(k;q_n)\to h(k;q)$ as $n\to\infty$ for all $k\in\bR_0$ by Remark~\ref{re.25}, the statement proved by F.~Riesz~\cite{Rie} (widely known as Scheff\'e's lemma~\cite{Sch,Kus}) establishes $L_1$-convergence of~$h(\cdot;q_n)$ to $h(\cdot;q)$ and thus completes the proof of the theorem.
\end{proof}

\medskip\noindent\emph{Acknowledgement}
	The authors thank Prof.~Gesztesy for stimulating discussions and literature comments. The research was supported by Ministry of Education and Science of Ukraine, grant no.~0118U002060. The first author acknowledges support of the Centre for Innovation and Transfer of Natural Sciences and Engineering Knowledge at the University of Rzesz\'ow.


\end{document}